\numberwithin{equation}{section}
\theoremstyle{plain}
\newtheorem{Th}{Theorem}[section]
\newtheorem{Lemma}[Th]{Lemma}
\newtheorem{Cor}[Th]{Corollary}
\newtheorem{Prop}[Th]{Proposition}
\theoremstyle{definition}
\newtheorem{Def}[Th]{Definition}
\newtheorem{Rem}[Th]{Remark}
\newtheorem{?}[Th]{Problem}
\def\al{\alpha}
\def\w{\wedge}
\def\R{\mathbb{R}}
\def\C{\mathbb{C}}
\def\Lm{\Lambda}
\def\om{\omega}
\def\Om{\Omega}
\def\vp{\varphi}
\def\ddt{\frac{\partial}{\partial t}}
\def\ddu{\frac{\partial}{\partial u}}
\def\ip{\raise1pt\hbox{\large$\lrcorner$}\>}
\DeclareMathOperator{\vol}{vol}
\DeclareMathOperator{\Ric}{Ric}
\begin{document}
	
\title{$S^1$-invariant Laplacian flow}

\author[U. Fowdar]{Udhav Fowdar}
	
\address{University College London \\ Department of Mathematics \\
Gower Street \\
WC1E  6BT\\
London \\
UK}
	
\email{udhav.fowdar.12@ucl.ac.uk}
		
%\subjclass[2010]{Primary: 05C??. Secondary: 05C??}
	
\keywords{$G_2$-structure, $SU(3)$-structure, $S^1$ symmetry, Laplacian flow, Soliton} 

\subjclass[2010]{53C10, 53C29}

\begin{abstract} 
The Laplacian flow is a geometric flow introduced by Bryant as a way for finding torsion free $G_2$-structures starting from a closed one. If the flow is invariant under a free $S^1$ action then it descends to a flow of $SU(3)$-structures on a $6$-manifold. In this article we derive expressions for these evolution equations. In our search for examples we discover the first inhomogeneous shrinking solitons, which are also gradient. We also show that any compact non-torsion free soliton admits no infinitesimal symmetry.
\end{abstract}
	
\maketitle
\tableofcontents

\section{Introduction}
\subsection{Overview.} A symplectic $SU(3)$-structure on a $6$-manifold $P^6$ is given by a pair $(\om,\Om)$, where $\om$ is a symplectic form and $\Om$ is a complex $(3,0)$-form with real and imaginary parts given by $\Om^+$ and $\Om^-$ respectively.
Our main goal in this article is to derive the evolution equations for such an $SU(3)$-structure on $P^6$,
together with a Higgs fields $H:P^6 \to \R^+$ and a connection $1$-form $\xi$ on an $S^1$ bundle $L^7 \to P^6$, such that 
\[\vp(t) :=\xi(t) \w \om(t) + H(t)^{3/2} \Om^+(t) \]
is a solution to the Laplacian flow (see below for the definition). Put differently we want to understand how the $SU(3)$-structure on the quotient of an $S^1$-invariant solution to the Laplacian flow evolves. 
This investigation is further motivated from the results of Lotay-Wei in \cite{Lotay2017} that assert that if the initial $3$-form $\vp_0$ on a compact manifold $L^7$ is $S^1$-invariant then so is the flow for as long as it exists.
We study two simple cases and as a result we construct the first examples of  inhomogeneous shrinking Laplacian solitons. It is known that shrinking solitons do not occur on compact manifolds and that the steady ones are actually torsion free \cite{Lotay2017}. Indeed the examples that we construct are non-compact. There are currently no known compact examples of expanding solitons. We prove that such expanders, if they exist at all, in fact do not admit any infinitesimal symmetry i.e. there are no Killing vector fields which preserve the associated $G_2$ $3$-form $\vp$. This gives a partial explanation why such solitons have not (yet) been found.

\subsection{Motivation.} A closed $G_2$-structure on a $7$-manifold $L^7$ is given by a closed non-degenerate $3$-form $\vp$, which in turn determines  a Riemannian metric $g_\vp$ and volume form $\vol_\vp$. In particular, $\vp$ also defines a Hodge star operator $*_\vp$. If $*_\vp\vp$ is also closed then the holonomy of $g_\vp$ is a subgroup of $G_2$ and consequently the metric $g_\vp$ is Ricci-flat. The Laplacian flow, defined as the initial value problem
\begin{align}
\ddt \vp(t) &= \mathrm{\Delta}_{\vp(t)}\vp(t),\label{laplacianequa}\\
\vp(0) &=\vp_0,\\
d\vp_0&=0 \label{laplacianequa3}
\end{align}
where $ \mathrm{\Delta}_{\vp} := dd^{*_\vp}+d^{*_\vp} d$, was introduced by Bryant in \cite{Bryant06someremarks} as a way of potentially deforming $\vp_0$ within its cohomology class to a torsion free one. The short time existence of the flow was proved by Bryant and Xu in \cite{BryantXu2011}, and long time existence, uniqueness and compactness results were proved by Lotay and Wei in \cite{Lotay2017}. 

In recent years there have been many works on the Laplacian flow cf. \cite{Fine2018, Fino2017, Fino2020, LotayLambert, Lauret2017, Nicolini2020} but the flow is nonetheless still very complicated to study in its full generality. A natural strategy to simplifying the equations is to impose symmetry. 
Aside from the homogeneous cases, three notable works in this direction were carried out by Fine and Yao in \cite{Fine2018}, where the authors, motivated by a question of Donaldson, study the flow on $L^7=M^4\times \mathbb{T}^3$ as a way of deforming a hypersymplectic triple on $M^4$ to a hyperK\"ahler one,
by Lotay and Lambert in \cite{LotayLambert}, where the authors reinterpret the flow on $L^7=B^3 \times \mathbb{T}^4$ as a spacelike mean curvature flow on $B^3\subset\R^{3,3}$,
and by Fino and Raffero in \cite{Fino2017}, where the authors study the flow for warped $G_2$-structures on $L^7=P^6\times S^1$. In each case the presence of a free abelian group action allows for a reduction of the Laplacian flow to a lower dimensional space. It is exactly this common feature that motivates the work undertaken here.

\subsection{Outline.} In this article we consider the more general case when $L^7$ only admits a free $S^1$ action. In this case the Laplacian flow descends to a flow on a symplectic $SU(3)$-structure on $P^6$, also called almost K\"ahler, together with the data of a Higgs field and a connection $1$-form. This leads us to try and understand the resulting flow. In section \ref{quotientofclosedg2structures} we prove a Gibbons-Hawking type construction for $S^1$-invariant closed $G_2$-structures (see Theorem \ref{GHcalibratedG2}) and we express the intrinsic torsion of the $G_2$-structure in terms of that of the $SU(3)$-structure. In other words we encode that data of an $S^1$-invariant closed $G_2$-structure only in terms of data on the quotient. In section \ref{LFsection} we then derive the evolution equations for such data when $\vp$ is evolving by the Laplacian flow. We also prove that certain cohomology classes have to vanish on a compact manifold with an exact $G_2$-structure (see Theorem \ref{topoexact}) and as a consequence we deduce that compact expanding solitons cannot arise from our construction. Even if $(L^7,\vp)$ is torsion free the $SU(3)$-structure on the quotient $P^6:=L^7/S^1$ is generally not torsion free, though it is always symplectic. For a flow of $SU(3)$-structures a natural question one can ask is if any class of $SU(3)$-structure is preserved. For instance in \cite{Fino2017}, the authors show that under certain constraints if $(\om,\Om)$ is initially symplectic half-flat then this is preserved by the flow. In section \ref{examples} we first consider the original example of the Laplacian flow studied by Bryant and show that the symplectic form on the quotient, for a given $S^1$ action, in fact remains constant under the flow, though the intrinsic torsion of the quotient $SU(3)$-structure is generic. Motivated by the work of Apostolov and Salamon in \cite{Apostolov2003} where the authors construct (incomplete) examples of $G_2$ metrics which admit K\"ahler reduction, we next search for solutions to the Laplacian flow on these spaces. We show that the flow indeed preserves the K\"ahler property in this setting and we find explicit (complete) shrinking gradient solitons. These are the only currently known examples of inhomogeneous shrinkers.\\ 

\addtocontents{toc}{\protect\setcounter{tocdepth}{-1}}
\noindent\textbf{Acknowledgements.}
The author is indebted to his PhD advisors Jason Lotay and Simon Salamon for their constant support and many helpful discussions that led to this article. The author would also like to thank Andrew Dancer and Lorenzo Foscolo for helpful comments on a version of this paper that figures in the author's thesis. This work was supported by the Engineering and Physical Sciences Research Council [EP/L015234/1], The EPSRC Centre for Doctoral Training in Geometry and Number Theory (The London School of Geometry and Number Theory), University College London. 
\addtocontents{toc}{\protect\setcounter{tocdepth}{1}}

\section{Preliminaries}
The aim of this section is mainly to give a concise introduction to the basic objects of interest in this article and to set up notation. 
\subsection{$SU(3)$-structures}
\begin{Def}
An $SU(3)$-structure on a $6$-manifold $P^6$ consists of an almost Hermitian structure determined by an almost complex structure $J$, a non-degenerate $2$-form $\om$ and an almost Hermitian metric $g_\om$ satisfying
\begin{equation}
g_\om(\cdot,\cdot)=\om(\cdot ,J\cdot), \label{compatibilitygomega}
\end{equation}
and a $(3,0)$-form $\Om:=\Om^++i\Om^-$ satisfying the two conditions
\begin{gather} 
\om \w \Om^\pm=0, \label{compatibilitysu3}\\
\frac{2}{3} \om^3={\Om^+ \w \Om^-} .\label{normalisationsu3}
\end{gather}
\end{Def}
Note that condition (\ref{compatibilitysu3}) is just a consequence of the fact that $\om$ is of type $(1,1)$.
Although an $SU(3)$-structure consists of the data $(g_\om,\om,J,\Om)$, one can in fact recover the whole $SU(3)$-structure only from the pair $(\om,\Om^+)$, or $(\om,\Om^-)$. This observation is due to Hitchin in \cite{Hitchin2000} where he shows that $\Om^+$, or $\Om^-$, determines $J$. Abstractly this follows from the fact that the stabiliser of $\Om^\pm$ in $GL^+(6,\R)$ is congruent to $SL(3,\C)\subset GL(3,\C)$. The metric is then determined by (\ref{compatibilitygomega})
and $\Om^-=J(\Om^+)=*_\om\Om^+$, where $*_\om$ is the Hodge star operator determined by $g_\om$ and the volume form
\begin{equation}
\vol_\om:=\frac{1}{6}\om^3=\frac{1}{4}\Om^+\w\Om^-.\end{equation}
As $SU(3)$ modules the space of exterior differential forms splits as
\begin{align}
\Lm^1 &= \Lm^1_6=[\![ \Lm^{1,0}]\!]\nonumber\\
\Lm^2 &= \langle\om\rangle \oplus\Lm^2_6 \oplus \Lm^2_{8}\label{decomposition2form}\\
\Lm^3 &= \langle\Om^+\rangle\oplus \langle\Om^-\rangle \oplus \Lm^3_6 \oplus \Lm^3_{12}\nonumber
\end{align}
where
\begin{align}
\Lm^2_6=[\![ \Lm^{2,0}]\!]&=\{\al\in \Lm^2\ |\ *_\om(\al \w \om)=\al\}\label{lambda26}\\
&=\{*_\om(\al\w\Om^+) \ |\ \al\in\Lm^1_6 \}\label{lambda262}\\
\Lm^2_{8}=[ \Lm^{1,1}_0]&=\{\al\in \Lm^2\ |\ *_\om(\al \w \om)=-\al\}\label{lambda28}\\
&=\{\al\in \Lm^2\ |\ \al \w \om^2=0 \ \ \text{and} \ \ \al \w \Om^+=0 \}\label{lambda282}\\
\Lm^3_6=[\![ \Lm^{2,1}]\!]&=\{\al\w \om\ |\ \al \in \Lm^1_6\}\nonumber\\
\Lm^3_{12}=[\![ \Lm^{2,1}_0]\!]&=\{\al\in \Lm^3\ |\ \al \w \om=0,\ \al \w \Om^{\pm}=0 \}\nonumber
\end{align}
and we get a corresponding decomposition for $\Lm^4$ and $\Lm^5$ via $*_\om$. The $[\![\Lm^{p,q}]\!]$ and $[\Lm^{p,p}]$ notation refers to taking the corresponding \textit{real} underlying vector space i.e. $$[\![\Lm^{p,q}]\!]\otimes \C=\Lm^{p,q}\oplus\Lm^{q,p}\text{\ \ and\ \ }[\Lm^{p,p}]\otimes \C=\Lm^{p,p}$$ cf. \cite{ChiossiSalamonIntrinsicTorsion, Salamon1989}. As $SU(3)$ modules the spaces $\Lm^\bullet_6$ are all isomorphic. In computations we will often need to interchange between these spaces and to do so we use the following lemma which follows from a simple calculation. 
\begin{Lemma}\label{interchangelemma}
Given a $1$-form $\al\in\Lm^1_6$, let $\beta:=*_\om(\al \w \Om^-)\in \Lm^2_6$. Then the following hold:
\begin{enumerate}
\item $J(\al)\w\Om^+ = \al \w \Om^-=\beta\w\om$
\item $\beta \w \Om^-=2*_\om(\al)=-(J\al)\w \om^2$
\item $\beta \w \Om^+=2*_\om(J\al)=\al\w\om^2$
\end{enumerate}
\end{Lemma}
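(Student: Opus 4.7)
The lemma is a pointwise identity between $SU(3)$-equivariant expressions in $\alpha$, so the natural strategy is to reduce to a single convenient choice of $\alpha$ in a local $SU(3)$-adapted coframe and verify each equality by direct computation. Concretely, at any point of $P^{6}$ I fix an orthonormal coframe $\{e^{1},\dots,e^{6}\}$ in which the $SU(3)$-structure takes its standard form
\[
\omega = e^{12}+e^{34}+e^{56},\qquad \Omega^{+}=e^{135}-e^{146}-e^{236}-e^{245},\qquad \Omega^{-}=e^{136}+e^{145}+e^{235}-e^{246},
\]
with $J$ acting on $1$-forms by $Je^{2k-1}=-e^{2k}$, $Je^{2k}=e^{2k-1}$.

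Since $SU(3)$ acts transitively on the unit sphere of $\Lambda^{1}=\Lambda^{1}_{6}$, and since both sides of each asserted equality are $SU(3)$-equivariant tensorial expressions in $\alpha$, it suffices to verify the three identities on a single nonzero vector, which by rotation in the coframe I can take to be $\alpha=e^{1}$; the general case then follows by $SU(3)$-equivariance and $\mathbb{R}$-linearity in $\alpha$.

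With $\alpha=e^{1}$ the first step is to compute $\alpha\wedge\Omega^{-}=e^{1235}-e^{1246}$, apply $*_{\omega}$ term by term to obtain $\beta=e^{35}-e^{46}\in\Lambda^{2}_{6}$ (which one checks lies in $\Lambda^{2}_{6}$ by verifying $*_{\omega}(\beta\wedge\omega)=\beta$ using (\ref{lambda26})), and then expand the six wedge products appearing in (1)--(3) in the basis $\{e^{i_{1}\cdots i_{k}}\}$. Each of the equalities
\[
Je^{1}\wedge\Omega^{+}=e^{1}\wedge\Omega^{-}=\beta\wedge\omega,\quad \beta\wedge\Omega^{-}=2\,*_{\omega}\!e^{1}=-(Je^{1})\wedge\omega^{2},\quad \beta\wedge\Omega^{+}=2\,*_{\omega}\!(Je^{1})=e^{1}\wedge\omega^{2}
\]
then reduces to a straightforward combinatorial check using $\omega^{2}=2(e^{1234}+e^{1256}+e^{3456})$ and the explicit form of $\Omega^{\pm}$.

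The only real obstacle is bookkeeping with signs coming from the Hodge star and reordering of wedge products; there is no conceptual difficulty, which is why the statement is flagged in the text as following from a simple calculation. An alternative (and essentially equivalent) route would be to use the identification (\ref{lambda262}) together with the $SU(3)$-invariant inner products and pair both sides with a test form, but the direct frame computation is the cleanest.
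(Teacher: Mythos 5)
Your proposal is correct and is essentially the argument the paper intends: the paper simply asserts the lemma "follows from a simple calculation," namely the direct verification in a standard $SU(3)$-adapted coframe that you carry out (your model for $\omega,\Omega^{\pm},J$ is consistent with the paper's conventions $\tfrac23\omega^3=\Omega^+\wedge\Omega^-$, $\Omega^-=*_\omega\Omega^+$ and $(J\alpha)(v)=\alpha(Jv)$, and the values $\beta=e^{35}-e^{46}$, $\beta\wedge\Omega^-=2e^{23456}$, $\beta\wedge\Omega^+=2e^{13456}$ all check out). The reduction to $\alpha=e^1$ via $SU(3)$-equivariance and linearity is a standard and valid streamlining of the same computation.
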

\noindent We follow the convention that $(J\al)(v):=\al(Jv)$ for a $1$-form $\al$ and vector $v$, which might differ by a minus sign from other conventions in the literature. We also define the operator $d^cf:=J\circ df$ on a real function $f$.

The intrinsic torsion of the $SU(3)$-structure $(\om,\Om^+)$ is determined by
\begin{align}
d\om&=-\frac{3}{2}\sigma_0\ \Om^++\frac{3}{2}\pi_0\ \Om^-+\nu_1\w \om+\nu_3,\label{su3torsion1}\\
d\Om^+&=\pi_0\ \om^2+\pi_1\w\Om^+-\pi_2\w\om,\label{su3torsion2}\\
d\Om^-&=\sigma_0\ \om^2+(J\pi_1)\w\Om^+-\sigma_2\w\om\label{su3torsion3},
\end{align}
where $\sigma_0,\pi_0$ are functions, $\nu_1,\pi_1\in\Lm^1_6$, $\pi_2,\sigma_2\in\Lm^2_8$ and $\nu_3 \in \Lm^3_{12}$, cf. \cite{Bedulli2007}. Many well-known geometric structures can be recast using this formulation:
\begin{Def}\label{su3strucdefinitions}
	The $SU(3)$-structure $(\om,\Om^+)$	is said to be
	\begin{enumerate}
		\item Calabi-Yau if  all the torsion forms vanish,\label{CYdefinition}
		\item almost-K\"ahler if  $\sigma_0=\pi_0=\nu_1=\nu_3=0$,
		\item K\"ahler if all the torsion forms aside from $\pi_1$ vanish,
		\item half-flat if $\pi_0=\pi_1=\nu_1=\pi_2=0$. \label{halfflatdefinition}
	\end{enumerate}
\end{Def}
\noindent The next notion that we will need is that of a $G_2$-structure which we now define.
\subsection{$G_2$-structures}
\begin{Def}\label{g2definition}
	A $G_2$-structure on a $7$-manifold $L^7$ is given by a $3$-form $\vp$ that can be identified at each point $p\in L^7$ with the standard one on $\R^7$:
	\begin{equation*}
	\vp_0=dx_{123}+dx_{145}+dx_{167}+dx_{246}-dx_{257}-dx_{347}-dx_{356}, \end{equation*}
	where $x_1,\dots,x_7$ denote the coordinates on $\R^7$ and $dx_{ijk}$ is shorthand for $dx_i \w dx_j \w dx_k$.
\end{Def}
The reason for this nomenclature stems from the fact that the subgroup of $GL(7,\R)$ which stabilises $\vp_0$ is isomorphic to the Lie group $G_2$. Since $G_2$ is a subgroup of $SO(7)$ \cite{Bryant1987, Salamon1989} it follows that $\vp$ determines (up to a choice of  a constant factor) a Riemannian metric $g_\vp$ and volume form $\vol_\vp$ on $L^7$. Explicitly we define these by
\begin{equation}
\frac{1}{6}\ (U\ip \vp)\w (V\ip \vp) \w \vp = g_\vp(U,V) \vol_\vp, \label{metricg2} \end{equation}
where $U,V$ are vector fields on $L^7$. In particular, $\vp$ defines a Hodge star operator $*_\vp.$ The set of $3$-forms defining a $G_2$-structure is an open set in the space of sections of $\Lm^3$ which we denote by $\Lm^3_+(L^7)$. In this article we will only consider the situation when $\vp$ is closed. In this case we have
\begin{equation}
	d*_\vp\vp=\tau \w \vp,\label{g2torsion}
\end{equation}
for a unique $2$-form $\tau \in \mathfrak{g}_2 \cong \Lm^2_{14}\hookrightarrow \Lm^2\cong \mathfrak{so}(7)$ where 
\begin{align}
	\Lm^2_{14} &=\{ \al \in\Lm^2 \ | \ \al\w*_\vp\vp=0 \}\label{g2description2214}\\
	&=\{ \al \in\Lm^2 \ | \ *_\vp(\al\w \vp)=- \al \}.\nonumber
\end{align}
The $2$-form $\tau$ is called the intrinsic torsion of the closed $G_2$-structure. On $(L^7,\vp)$ there is also a natural $G_2$ equivariant map given by
\begin{align*}
j: \Lm^3 &\to S^2\nonumber\\
j(\gamma)(U,V) &=*_\vp( (U \ip \vp) \w (V \ip\vp) \w \gamma ).\label{isosym}
\end{align*}
The kernel of $j$ is 
\begin{gather*}
\Lm^3_7 =\{ U\ip *_\vp\vp \ | \ U\in \Gamma(TL) \}
\end{gather*}
and as $G_2$ modules we have
\begin{equation*}\Lm^3= \langle\vp \rangle \oplus \Lm^3_7\oplus \Lm^3_{27} \end{equation*}
where $\Lm^3_{27}$ is identified with the space of traceless symmetric $2$-tensors via $j$ while $j(\vp)=6g_\vp$.
If $\vp$ is both closed and coclosed i.e. $\tau=0$, then the holonomy of $g_\vp$ is contained in $G_2$ and $\Ric(g_\vp)=0$. We refer the reader to the classical references \cite{Bryant1987, Salamon1989} for proofs of the aforementioned facts.
The problem of constructing examples with holonomy \textit{equal} to $G_2$ is very hard. The Laplacian flow was introduced as a means to tackle this problem. 
\subsection{The Laplacian flow}
The Laplacian flow (\ref{laplacianequa})-(\ref{laplacianequa3}) preserves the closed condition i.e. $d\vp(t)=0$ for all $t$ and thus (\ref{laplacianequa}) is equivalent to 
\begin{equation}
\ddt \vp(t) =d\tau(t) .\label{LF}
\end{equation} 
In the compact setting, Hitchin gives the following interpretation of the flow. Consider the functional $\Psi : [\vp_0]^{+} \to \R^{+}$ defined by
\[\Psi(\rho):= \frac{1}{7} \int_L \rho \w *_{\rho} \rho = \int_L \vol_{\rho}\]
where $[\vp_0]^+:=\{\vp_0+d\beta\in \Lm^3_+(L) \ |\ \beta \in \Lm^2(L)\}$ denotes an open set in $[\vp_0]\in H^3(L,\R).$ Then computing the Euler-Langrange equation Hitchin finds that the critical points of $\Psi$ satisfy $d*_\rho \rho=0$. The Laplacian flow is the gradient flow of $\Psi$ with respect to an $L^2$ norm induced by $g_{\rho}$ on $[\vp_0]^+.$ The Hessian of $\Psi$ at a critical point is non-degenerate transverse to the action of the diffeomorphism group and in fact is negative definite. Thus $\Psi$ can be interpreted as a Morse-Bott functional and the torsion free $G_2$-structures correspond to the local maxima. In the non-compact setting this interpretation is not valid but the Laplacian flow is still well-defined and critical points are still torsion free $G_2$-structures cf. \cite{Lotay2017}. 

In \cite{Bryant06someremarks}, Bryant computes the evolution equations for the following geometric quantities under the Laplacian flow
\begin{gather}
\ddt(*_\vp\vp)= \frac{1}{3}\|\tau\|^2_{\vp}*_\vp\vp-*_\vp d\tau,\label{dualevolution} \\
\ddt(g_\vp )=-2 \Ric(g_\vp)+\frac{1}{6}\|\tau \|^2_\vp g_\vp +\frac{1}{4}j(*_\vp(\tau \w \tau)) ,\label{metricevolution}\\
\ddt(\vol_\vp)=\frac{1}{3}\|\tau \|^2_\vp \vol_\vp. \label{volumeevolution}
\end{gather}
We see immediately that up to lower order terms (\ref{metricevolution}) coincides with the Ricci flow of $g_\vp$. In \cite[(4.30)]{Bryant06someremarks} Bryant also derives an expression for the Ricci tensor in terms of the torsion form only and thus, one can express (\ref{metricevolution}) only in terms of the torsion form as
\begin{equation}
\ddt(g_\vp)=-\frac{1}{3}\|\tau\|^2_\vp g_\vp+\frac{1}{2}j(d\tau).\label{metricevolution2}
\end{equation}
The simplest solutions to (\ref{laplacianequa}) are those that evolve by the symmetry of the flow. If $\vp_0$ satisfies 
\begin{equation}\label{solitonequationoriginal}
\mathrm{\Delta}_{\vp_0}\vp_0 = \lambda\cdot \vp_0 + \mathcal{L}_V \vp_0
\end{equation} 
for a vector field $V$ and constant $\lambda$ then \begin{equation*}
\vp_t:=(1+\frac{2}{3}\lambda t )^{\frac{3}{2}} F^{*}_t \vp_0 ,
\end{equation*} 
where $F_t$ is the diffeomorphism group generated by $U(t)= (1+\frac{2}{3} \lambda t)^{-\frac{2}{3}}V$, is a solution to flow and $\vp_0$ is called a Laplacian soliton \cite{Lotay2017}. Depending on whether $\lambda$ is positive, zero or negative the soliton is called expanding, steady or shrinking respectively. If $V$ is a gradient vector field then we call them gradient solitons. 
We refer the reader to \cite{Lotay2017} for foundational theory for the Laplacian flow.\\

\noindent\textbf{Notations.} In what follows we shall write $\Lm^i$ for the space of differential forms, as well as their sections, omitting reference to the underlying space $P^6$ or $L^7$ unless there is any possible ambiguity. For a $k$-form $\al$ we will denote by $(\al)^k_l$ its projection in $\Lm^k_l$.
We shall also omit pullback signs and identify forms on $P^6$ with their pullbacks on $L^7$.

\section{$S^1$ reduction of closed $G_2$-structure}\label{quotientofclosedg2structures}

Our aim in this section is to characterise $S^1$-invariant closed $G_2$-structures purely in terms of the data on the quotient space. In particular, we derive a Gibbons-Hawking Ansatz type construction for closed $G_2$-structures, see Theorem \ref{GHcalibratedG2}.

Let $\vp$ be a closed $G_2$-structure on $L^7$ which is invariant under a free $S^1$ action generated by a vector field $Y$. We define a connection $1$-form $\xi$ on $L^7$ by 
\begin{equation} \xi(\cdot):= H^2 g_\vp(Y,\cdot),\label{connection}\end{equation}
where $H:=\|Y\|^{-1}_\vp$. 
The hypothesis that this circle action is free i.e. $Y$ is nowhere vanishing ensures that $H$ is well-defined and moreover, by definition $H$ is also $S^1$-invariant. 
The quotient space $P^6:=L^7/S^1$ inherits an $SU(3)$-structure $(\om,\Om)$ given by
\begin{gather}
	\vp = \xi \w \om + H^{3/2} \Om^+,\label{g2form}\\
	*_\vp\vp = \frac{1}{2} H^2 \om^2-\xi \w H^{1/2}\Om^- ,\\
	g_\vp=H^{-2}\xi^2+Hg_\om.
\end{gather}
The curvature form $d\xi$ is invariant under $Y$ and horizontal i.e. 
\begin{equation}
	Y \ip d\xi = \mathcal{L}_Y \xi=0,
\end{equation}
and as such it descends  to $P^6$.
Since $\mathcal{L}_Y\vp=0$ and $\vp$ is closed, we have that
\begin{gather}
	d \om = 0 ,\\
	d \Om^+ = -\frac{3}{2}H^{-1} dH \w \Om^+ - H^{-\frac{3}{2}}d \xi \w \om.\label{torsu31}
\end{gather}
From (\ref{compatibilitysu3}) and the fact that $\vp \w \om$ is closed it follows that
\begin{equation}
	d\xi\w\om^2=0.\label{curvaturerestriction}
\end{equation}
We can also express the torsion form $\tau$ as 
\begin{equation}\tau= \tau_h + \xi \w \tau_v\end{equation} 
for a $2$-form $\tau_h$ and a $1$-form $\tau_v$ which are both basic i.e. they are horizontal and $S^1$-invariant (since  ${\mathcal{L}}_Y \tau=0$). Thus, $\tau_h$ and $\tau_v$ are really pullback of forms on $P^6$. Our goal is to encode the intrinsic torsion $\tau$ of the $S^1$-invariant closed $G_2$-structure (and its derivatives) only in terms of the data $(P^6,\om,\Om,H)$.

As $\tau \in \Lm^2_{14} $ it follows from (\ref{g2description2214}) that
\begin{gather}
	\tau_h \wedge \omega^2=0 \label{noomega},\\
	\tau_v \w \frac{1}{2}H^{3/2} \om^2=\tau_h\w \Om^-.\label{relation1}
\end{gather}
From (\ref{decomposition2form}), we note that (\ref{curvaturerestriction}) and (\ref{noomega}) imply that $d\xi$ and $\tau_h$ have no $\om$-component i.e. 
\begin{equation}
d\xi=(d\xi)^2_6+(d\xi)^2_8 \text{\ \ \ and\ \ \ }\tau_h=\tau_6+\tau_8\in \Lm^2_6\oplus\Lm^2_8 
\end{equation} 
The latter is not surprising since $\dim(\mathfrak{g}_2)=6+8$.
In terms of the $SU(3)$-structure we can express the condition $d*_\vp\vp=\tau\w \vp$ as
\begin{gather}
	d \Om^-= H^{-\frac{1}{2}}\tau_6 \w \om + (H \tau_v -\frac{1}{2}H^{-1}d^c H) \w \Om^++H^{-\frac{1}{2}}\tau_8 \w \om,\label{torsu32}\\
	H dH \w \om^2 - (d\xi)^2_6 \w H^{1/2}\Om^-=\tau_6 \w H^{\frac{3}{2}}\Om^+,\label{relation2}
\end{gather}
where we recall $d^cH=JdH.$
Observe that the forms $(d\xi)^2_6$, $\tau_v$ and $\tau_6$ are related by (\ref{relation1}), (\ref{relation2}) and the fact that $\pi_1$ appears in both (\ref{su3torsion2}) and (\ref{su3torsion3}). Thus, these forms are all essentially equivalent, to be more precise we have:
\begin{Lemma}\label{nicelemma}
	The torsion forms $\tau_v$ and $\tau_6$ are determined by the curvature component $(d\xi)^2_6$ via 
	$$\tau_v =-2 H^{-2}(d^cH + J(\gamma^1_6)) \ \ \ \text{\ and\ } \ \ -2\tau_6 = H^{\frac{3}{2}}*_\om(\tau_v \w \Om^+),$$
	where the $1$-form $\gamma^1_6$ is defined by $H^{-\frac{1}{2}}(d \xi)^2_6 \w \om = \gamma^1_6 \w \Om^+$.
\end{Lemma}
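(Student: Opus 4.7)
The plan is to extract both expressions from the two algebraic constraints (\ref{relation1}) and (\ref{relation2}) by systematic use of Lemma \ref{interchangelemma}, exploiting that the Lefschetz map $\wedge\om^2:\Lm^1\to\Lm^5$ is an isomorphism on a symplectic $6$-manifold.

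First I would rewrite the term $(d\xi)^2_6\w\Om^-$ appearing in (\ref{relation2}) in terms of $\gamma^1_6$. Writing $(d\xi)^2_6=*_\om(\mu\w\Om^-)$ for a unique $\mu\in\Lm^1_6$, Lemma \ref{interchangelemma}(1) identifies $(d\xi)^2_6\w\om=J\mu\w\Om^+$; comparison with the defining relation $H^{-1/2}(d\xi)^2_6\w\om=\gamma^1_6\w\Om^+$ together with $J^2=-1$ then forces $\mu=-H^{1/2}J\gamma^1_6$. Lemma \ref{interchangelemma}(2) immediately converts this to $(d\xi)^2_6\w\Om^-=-H^{1/2}\gamma^1_6\w\om^2$, so (\ref{relation2}) collapses to $H(dH+\gamma^1_6)\w\om^2=\tau_6\w H^{3/2}\Om^+$. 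Writing $\tau_6=*_\om(\sigma\w\Om^-)$ for a unique $\sigma\in\Lm^1_6$ and applying Lemma \ref{interchangelemma}(3) to convert $\tau_6\w\Om^+=\sigma\w\om^2$, the injectivity of $\wedge\om^2$ identifies $\sigma=H^{-1/2}(dH+\gamma^1_6)$.

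For (\ref{relation1}) I first note that $\tau_8\in\Lm^2_8=[\Lm^{1,1}_0]$ satisfies $\tau_8\w\Om^\pm=0$: the $+$ case is the defining property (\ref{lambda282}), and the $-$ case follows by the type count $\Lm^{1,1}\w\Lm^{3,0}\subset\Lm^{4,1}=0$ on a $6$-manifold. Hence (\ref{relation1}) reduces to $\tau_v\w\tfrac{1}{2}H^{3/2}\om^2=\tau_6\w\Om^-$. Using Lemma \ref{interchangelemma}(2) with $\alpha=\sigma$ and the convention $J(df)=d^cf$, one gets $\tau_6\w\Om^-=-H^{-1/2}(d^cH+J\gamma^1_6)\w\om^2$; cancelling $\w\om^2$ yields the first identity $\tau_v=-2H^{-2}(d^cH+J\gamma^1_6)$.

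The second identity then drops out by rewriting this as $\tau_v=-2H^{-3/2}J\sigma$, applying Lemma \ref{interchangelemma}(1) to get $J\sigma\w\Om^+=\tau_6\w\om$, hence $\tau_v\w\Om^+=-2H^{-3/2}\tau_6\w\om$, and applying $*_\om$ while invoking the self-duality (\ref{lambda26}) that $*_\om(\tau_6\w\om)=\tau_6$. The main obstacle is not conceptual but bookkeeping: tracking the fractional powers of $H$ and the signs produced by the author's conventions $J^2=-1$ and $(J\al)(v)=\al(Jv)$ through the repeated invocations of Lemma \ref{interchangelemma}.
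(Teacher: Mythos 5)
Your proof is correct, but it takes a different route from the paper's for the first identity. The paper never uses (\ref{relation2}) in its proof: instead it introduces $\beta_6$ by $\tau_6\w\om=H^{3/2}\beta_6\w\Om^+$, rewrites (\ref{torsu31}) and (\ref{torsu32}) in the irreducible normal form (\ref{omplustorsion})--(\ref{omminustorsion}), and then exploits the structural fact that the \emph{same} torsion $1$-form $\pi_1$ appears as the coefficient of $\Om^+$ in $d\Om^+$ and as $J\pi_1$ in $d\Om^-$ (per (\ref{su3torsion2})--(\ref{su3torsion3}), citing the Bedulli--Vezzoni classification); this yields $d^cH+J\gamma^1_6=-H^2(\tau_v+\beta_6)$, which combined with $\tau_v=-2\beta_6$ from (\ref{relation1}) gives the result. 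You instead extract the link between $\gamma^1_6$ and $\tau_6$ purely from the algebraic constraint (\ref{relation2}), after observing that $\tau_8\w\Om^\pm=0$ and $(d\xi)^2_6\w\Om^-=-H^{1/2}\gamma^1_6\w\om^2$, and then feed the resulting $\sigma=H^{-1/2}(dH+\gamma^1_6)$ into (\ref{relation1}). The paper itself notes that (\ref{relation1}), (\ref{relation2}) and the shared $\pi_1$ are three equivalent sources of relations among $(d\xi)^2_6$, $\tau_v$ and $\tau_6$, so you have simply chosen a different pair: your version is more self-contained (it needs only Lemma \ref{interchangelemma} and the injectivity of $\w\,\om^2$ and $\w\,\Om^+$ on $1$-forms, not the normal form of the intrinsic torsion), at the cost of the extra bookkeeping with the auxiliary $1$-forms $\mu$ and $\sigma$; the paper's version is shorter once (\ref{omplustorsion})--(\ref{omminustorsion}) are in hand, and those displays are reused later in the paper anyway. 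Your derivation of the second identity ($-2\tau_6=H^{3/2}*_\om(\tau_v\w\Om^+)$, via $\tau_v\w\Om^+=-2H^{-3/2}\tau_6\w\om$ and the self-duality (\ref{lambda26})) matches the paper's computation essentially verbatim.
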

\begin{proof}
	Let $\tau_6\w \om = H^{\frac{3}{2}}\beta_6\w \Om^+$ for a $1$-form $\beta_6$ (note that the existence of $1$-forms $\beta_6$ and $\gamma_6^1$ follows from (\ref{lambda262})), then using Lemma \ref{interchangelemma} we can express the $SU(3)$ torsion forms from (\ref{torsu31}) and (\ref{torsu32}) in irreducible summands as
	\begin{align}
		d \Om^+ = (-\frac{3}{2}H^{-1}dH - H^{-1}\gamma^1_6) \w \Om^+-H^{-\frac{3}{2}} (d \xi)^2_8\w \om,\label{omplustorsion}\\
		d \Om^- = (H(\tau_v + \beta_6)-\frac{1}{2}H^{-1}d^c H)\w \Om^++H^{-\frac{1}{2}} \tau_8 \w \om,\label{omminustorsion}
	\end{align}
	and hence comparing the torsion $1$-form $\pi$ from (\ref{su3torsion2}) and (\ref{su3torsion3}) we have that
	\[d^c H + J \gamma^1_6 = - H^2(\tau_v + \beta_6).\]
	From (\ref{relation1}) and using $(2)$ of Lemma \ref{interchangelemma} we find that
	\[2H^{-3/2}\tau_6 \w \om=2H^{-3/2}*_\om\tau_6=J(\tau_v)\w \Om^-.\]
	Another application of Lemma \ref{interchangelemma} now shows that 
	\[2\beta_6 \w \Om^+=2H^{-3/2}\tau_6 \w \om=J(\tau_v)\w \Om^-=-\tau_v \w \Om^+\]
	i.e. $\tau_v=-2\beta_6$ and this completes the proof.
\end{proof}
In particular Lemma \ref{nicelemma} asserts that the torsion form $\tau_h$ completely determines $\tau_v$.
We can now express the $SU(3)$ torsion forms as:
\begin{gather}
	\tau_6= H^{-\frac{1}{2}} *_\om((d^c H+J\gamma_6^1) \w \Om^+), \\
	\tau_8 = -H^{\frac{1}{2}}*_\om d\Om^- - *_\om(H^{-\frac{1}{2}} (\frac{3}{2} d^c H+J\gamma^1_6) \w \Om^+). \end{gather}
Since $\tau_h=\tau_6+\tau_8$, it follows, using Lemma \ref{interchangelemma}, that
\begin{align}
	\tau_h &= -H^{1/2}*_\om d*_\om\Om^+-*_\om (d^c(H^{1/2})\w\Om^+)\nonumber\\
	&= d^{*_\om}(H^{1/2}\Om^+),\label{tauhdef}
\end{align} 
where $d^{*_\om}:=-*_\om d *_\om$ denotes the codifferential on $P^6$
and hence the $G_2$ torsion form can be expressed as:
\begin{align}
\tau 
%&= - H^{\frac{1}{2}} *_\om d\Om^- - \frac{1}{2} H^{-\frac{1}{2}}*_\om-(d^cH\w \Om^+)-2 \xi \w (H^{-2} d^c H + H^{-2}J\gamma^1_6),\nonumber\\
&=  d^{*_\om}(H^{1/2}\Om^+) -2 H^{-2} \xi \w (d^c H + J\gamma^1_6).\label{torsionformS1invt}
\end{align}
In particular we have $d^{*_\om}\tau_h=0,$ which is equivalent to
\begin{equation}d\tau_6 \w \om = d\tau_8 \w \om. \label{t68}
\end{equation}
i.e. $(d\tau_6)^3_6=(d\tau_8)^3_6$.
One can also extract the $\Om^-$-component of $d\tau_h\in \Lm^3$ from the next Proposition. 
\begin{Prop}
The following holds
\begin{equation}
d(H^{3/2}\tau_h \w \Om^+)= g_{\om}(d\xi, \tau_h)\vol_\om, \label{div}
\end{equation}
or equivalently $(d\tau_h)\w H^{3/2}\Om^+=2 g_\om(d\xi, \tau_6)\vol_\om$.
In particular, if $P^6$ is compact then
\[\int_{P^6} g_{\om}(d\xi,d^{*_{\om}}(H^{1/2}\Om^+)) \vol_\om=0.\]
\end{Prop}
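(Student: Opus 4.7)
The plan is to expand $d(H^{3/2}\tau_h\w\Om^+)$ by the Leibniz rule in the form
\[d(H^{3/2}\tau_h\w\Om^+) = d\tau_h\w H^{3/2}\Om^+ + \tau_h \w d(H^{3/2}\Om^+),\]
and rewrite each summand so that the claimed identity falls out. Closedness of $\vp = \xi \w \om + H^{3/2}\Om^+$ together with $d\om = 0$ (which follows from the horizontal part of $d\vp=0$) immediately yields $d(H^{3/2}\Om^+) = -d\xi \w \om$, so the second summand becomes $-\tau_h \w d\xi \w \om$. For the first summand I would use the identity $d\tau\w\vp = 0$ obtained by applying $d^2 = 0$ to $d*_\vp\vp = \tau\w\vp$ (and invoking $d\vp=0$ to kill the cross term). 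Decomposing $d\tau = d\tau_h + d\xi\w\tau_v - \xi\w d\tau_v$ and isolating the purely horizontal part of $d\tau\w\vp = 0$, i.e.\ the piece with no $\xi$, gives precisely
\[d\tau_h \w \Om^+ = -d\xi\w\tau_v\w\Om^+.\]

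Next I would invoke Lemma \ref{nicelemma}. From $-2\tau_6 = H^{3/2}*_\om(\tau_v\w\Om^+)$, applying $*_\om$ (using $*_\om^2 = 1$ in degree $4$) and the identity $*_\om\tau_6 = \tau_6\w\om$ for $\tau_6\in\Lm^2_6$ yields $H^{3/2}\tau_v\w\Om^+ = -2\tau_6\w\om$. Substituting produces $d\tau_h\w H^{3/2}\Om^+ = 2\,d\xi\w\tau_6\w\om$, and combining the two Leibniz pieces gives
\[d(H^{3/2}\tau_h\w\Om^+) = 2\,d\xi\w\tau_6\w\om - d\xi\w\tau_h\w\om = d\xi\w\tau_6\w\om - d\xi\w\tau_8\w\om.\]

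To finish, I would decompose $d\xi = (d\xi)^2_6 + (d\xi)^2_8$, which is legitimate because $d\xi\w\om^2 = 0$ by (\ref{curvaturerestriction}), and use the pointwise rules $\al\w\om = *_\om\al$ on $\Lm^2_6$ and $\al\w\om = -*_\om\al$ on $\Lm^2_8$, together with orthogonality of $\Lm^2_6$ and $\Lm^2_8$ under $g_\om$. The cross terms drop out and the two surviving terms collapse to $g_\om((d\xi)^2_6,\tau_6)\vol_\om + g_\om((d\xi)^2_8,\tau_8)\vol_\om = g_\om(d\xi,\tau_h)\vol_\om$, proving the first equality. The equivalent form $(d\tau_h)\w H^{3/2}\Om^+ = 2g_\om(d\xi,\tau_6)\vol_\om$ is then just the intermediate step above, since the $\Lm^2_8$-component of $d\xi$ pairs trivially with $\tau_6$. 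For the integral statement, Stokes' theorem on compact $P^6$ gives $\int_{P^6} d(H^{3/2}\tau_h\w\Om^+) = 0$, and then substituting $\tau_h = d^{*_\om}(H^{1/2}\Om^+)$ from (\ref{tauhdef}) produces the stated vanishing.

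The main thing to watch is bookkeeping: the horizontal/vertical split of $d\tau\w\vp$ spawns several pieces, and the Koszul signs picked up when $\xi$ moves past $d\tau_h$, $d\xi\w\tau_v$, and $\Om^+$ must be tracked. The other delicate step is turning Lemma \ref{nicelemma}, which is phrased as a Hodge-star relation, into the wedge identity $H^{3/2}\tau_v\w\Om^+ = -2\tau_6\w\om$, so that the two naturally appearing expressions $d\xi\w\tau_v\w\Om^+$ and $d\xi\w\tau_h\w\om$ can be compared directly.
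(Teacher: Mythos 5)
Your proposal is correct and takes essentially the same route as the paper: both extract the horizontal component of $d(\tau\w\vp)=0$ (the paper phrases this as differentiating $d*_\vp\vp=\tau\w\vp$), use $d\vp=0$ to get $d(H^{3/2}\Om^+)=-d\xi\w\om$, convert $H^{3/2}\tau_v\w\Om^+$ into $-2\tau_6\w\om$ via Lemma \ref{nicelemma}, and finish with the (anti-)self-duality of $\Lm^2_6\oplus\Lm^2_8$ under wedging with $\om$. The only difference is cosmetic — you apply the Leibniz rule to $d(H^{3/2}\tau_h\w\Om^+)$ up front rather than at the end — and all signs check out.
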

\begin{proof}
Differentiating (\ref{g2torsion}) we have
\[d(H^{3/2}\tau_h \w \Om^+)+d\xi \w \tau_h \w \om +d\xi \w \tau_v \w H^{3/2 }\Om^+=0\]
and
\[d(\tau_h \w \om + \tau_v \w H^{3/2}\Om^+)=0.\]
One can show that the latter equation is just the condition that $d^{*_\om}\tau_h=0$ while using Lemma \ref{nicelemma} the former simplifies to (\ref{div}). The equivalence is easily deduced since $d\vp=0$ implies that $d\xi \w \om+d(H^{3/2}\Om^+)=0.$
\end{proof}
Having now expressed $\tau$ only in terms of the data $(\xi,\om, \Om, H)$, we need to show that we can recover $(L^7,\vp)$ from $(P^6,\om,\Om,H)$, which is achieved by the next Theorem.
\begin{Th}[Gibbons-Hawking Ansatz for closed $G_2$-structures]\label{GHcalibratedG2}
Given a %simply connected 
symplectic manifold $(P^6,\om)$ admitting an $SU(3)$-structure $(\om,\Om)$ and a positive function $H:P^6\to \R^+$ satisfying 
\begin{equation}\label{integralconditiong2}
[-*_\om\big(*_\om (d(H^{3/2}\Om^+))\w\om\big)]\in H^2(P^6,2\pi\mathbb{Z}),
\end{equation}
then $\vp:=\xi\w \om + H^{3/2}\Om^+$ defines a closed $G_2$-structure on the total space of the $S^1$ bundle determined by (\ref{integralconditiong2}) and the curvature of the connection form $\xi$ is given by
\begin{equation}
d\xi= -*_\om(d^{*_\om}(H^{3/2}\Om^-)\w\om).\label{harmonicconditiong2}
\end{equation}
\end{Th}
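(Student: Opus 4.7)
Plan. The proof splits into three steps: establish an algebraic identity showing the 2-form from (\ref{harmonicconditiong2}) is exactly what closedness of $\vp$ demands, use the integrality hypothesis to realise it as the curvature of a connection on an $S^1$-bundle, and verify pointwise that $\vp=\xi\w\om+H^{3/2}\Om^+$ has $G_2$-type.

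Key identity. Write $\eta:=d(H^{3/2}\Om^+)\in\Lm^4(P^6)$. Since $\Om^+\w\om=0$ by (\ref{compatibilitysu3}) and $d\om=0$ by hypothesis,
\[
\eta\w\om=d(H^{3/2}\Om^+\w\om)=0.
\]
Using $\om\w\eta=g_\om(\om,*_\om\eta)\vol_\om$, this vanishing is equivalent to $*_\om\eta$ having no $\om$-component in the orthogonal decomposition $\Lm^2=\langle\om\rangle\oplus\Lm^2_6\oplus\Lm^2_8$. The eigenvalue behaviour $*_\om(\sigma\w\om)=\pm\sigma$ on $\Lm^2_{6,8}$ from (\ref{lambda26})--(\ref{lambda28}), together with $*_\om^2=1$ on even-degree forms on a $6$-manifold, then gives
\[
F\w\om=-\eta,\qquad F:=-*_\om\bigl(*_\om\eta\w\om\bigr).
\]
The identities $*_\om\Om^-=-\Om^+$ and $d^{*_\om}=-*_\om d\,*_\om$ show that $F$ coincides with the 2-form in (\ref{harmonicconditiong2}).

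Bundle, connection and $G_2$-structure. The hypothesis $[F]\in H^2(P^6,2\pi\mathbb{Z})$, which implicitly asserts closedness of $F$, identifies $[F]/(2\pi)$ with the first Chern class of a principal $S^1$-bundle $\pi\colon L^7\to P^6$. Any connection 1-form on this bundle has curvature in $[F]$, and after correcting by the pullback of a suitable 1-form on $P^6$ one obtains $\xi$ with $d\xi=\pi^*F$. Pointwise verification of the $G_2$-type of $\vp=\xi\w\om+H^{3/2}\Om^+$ is a local algebraic check using a rescaled frame: pick a $g_\om$-orthonormal coframe $(e^1,\ldots,e^6)$ adapted to $(\om,\Om^+)$ and set $\tilde e^i=H^{1/2}e^i$ for $i=1,\ldots,6$ and $\tilde e^7=H^{-1}\xi$. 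Then $\xi\w\om=\tilde e^7\w\tilde\om$ and $H^{3/2}\Om^+=\tilde\Om^+$, so $\vp=\tilde e^7\w\tilde\om+\tilde\Om^+$ matches the model $G_2$ $3$-form built from an $SU(3)$-structure, yielding simultaneously the metric $g_\vp=H^{-2}\xi^2+Hg_\om$. Closedness is then immediate:
\[
d\vp=d\xi\w\om-\xi\w d\om+\eta=F\w\om+\eta=0,
\]
by the key identity and $d\om=0$.

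Main obstacle. The delicate step is the identity $F\w\om=-\eta$: the defining formula for $F$ involves a sign flip between the $\Lm^2_6$- and $\Lm^2_8$-projections of $*_\om\eta$, so one must carefully track how $\cdot\w\om$ and $*_\om$ act on the $SU(3)$-decomposition of $\Lm^2$ and $\Lm^4$. Once this is in hand, the remaining ingredients --- Chern--Weil for the bundle construction, the rescaled frame for the pointwise $G_2$-type, and the direct $d\vp$ computation --- are essentially routine.
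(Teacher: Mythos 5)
Your proof is correct and uses essentially the same argument as the paper: the key identity $F\wedge\om=-\eta$ is exactly the paper's computation with the operator $\mathbf{L}(\al)=*_\om(\al\wedge\om)$ acting as $\pm1$ on $\Lm^2_6\oplus\Lm^2_8$, combined with $*_\om\Om^-=-\Om^+$ and Chern--Weil integrality for the bundle. You simply run the computation in the converse direction (from the prescribed curvature $F$ to $d\vp=0$) and spell out the pointwise positivity check via the rescaled coframe, which the paper delegates to its preceding quotient construction.
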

\begin{proof}
In view of the above quotient construction we only need to prove that $\xi$, defined by (\ref{connection}), satisfies (\ref{harmonicconditiong2}). Since $d\xi$ defines an \textit{integral} cohomology class on $P^6$, the result follows from Chern-Weil theory, or more precisely \cite[Theorem 2]{Kobayashi1956}.

Applying $*_\om$ to (\ref{torsu31}), and using (\ref{lambda26}) and (\ref{lambda28}), we find
\[(d\xi)^2_8=(d\xi)^2_6+*_\om(d(H^{3/2}\Om^+))\]
Consider now the automorphism $\mathbf{L}:\Lm^2_6\oplus \Lm^2_8 \to \Lm^2_6\oplus \Lm^2_8$ given by
\[\mathbf{L}(\al) =*_\om(\al \w \om)\]
which acts as the identity on $\Lm^2_6$ and minus identity on $\Lm^2_8$. Then 
\[d\xi=-\mathbf{L}(*_\om(d(H^{3/2}\Om^+)))=-*_\om(d^{*_\om}(H^{3/2}\Om^-)\w\om),\]
using that $\Om^+=-*_\om\Om^-$ and this completes the proof.
\end{proof}
\begin{Rem}
Let $B$ be an open set of $\R^3$ endowed with the Euclidean metric and volume form, and hence Hodge star operator $*_0$. The Gibbons-Hawking Ansatz states that given a positive  harmonic function $h:B \to \R^+$  such that 
\begin{equation}
[-*_{0}dh]\in H^2(B,2\pi\mathbb{Z})\label{integralityGH}
\end{equation}
then
\[g_M=h^{-1}\theta^2+h g_B\]
defines a hyperK\"ahler metric on the total space $M^4\to B$ of the $S^1$ bundle determined by (\ref{integralityGH}), where $\theta$ is a connection $1$-form satisfying $d\theta =-*_{0}dh.$ 
In Theorem \ref{GHcalibratedG2} condition (\ref{integralconditiong2}) is the higher dimensional analogue of the `integrality' condition (\ref{integralityGH}) that figures in the Gibbons-Hawking Ansatz and the (linear) harmonic condition $d^{*_0}dh=0$ on $h$ is replaced by the (non-linear) condition
\begin{equation}
d^{*_\om}(d^{*_\om}(H^{3/2}\Om^-)\w\om)=0
\end{equation}
on the pair $(H,\Om)$. So the data $(\om,\Om^-,H)$ is sufficient to recover the $G_2$-structure $\vp$ since the curvature of $\xi$ is already determined by (\ref{harmonicconditiong2}).
\end{Rem}
We can also characterise the torsion free $G_2$-structures in terms of the data on the base $P^6$ by the following Proposition.
\begin{Prop}
Assuming we are in the situation of Theorem \ref{GHcalibratedG2}, then
	\[ d*_\vp\vp=0 \textit{\ \ \  if and only if } \ \  d{*_\om}(H^{\frac{1}{2}} \Om^+)=0 .\]
	If this holds, then $(P^6,\om,\Om)$ is a Calabi-Yau $3$-fold if and only if $H$ is constant.
\end{Prop}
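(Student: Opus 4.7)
The plan for the first equivalence is to apply the formula (\ref{torsionformS1invt}) expressing the $G_2$ torsion form as
\[\tau = d^{*_\om}(H^{1/2}\Om^+) - 2H^{-2}\xi\w(d^cH+J\gamma^1_6).\]
Since $d*_\vp\vp = \tau\w\vp$ and wedging by $\vp$ is injective on $\Lm^2_{14}$, the condition $d*_\vp\vp = 0$ is equivalent to $\tau = 0$. The two summands above are linearly independent on $L^7$ (one is horizontal, the other contains a $\xi$ factor), so $\tau = 0$ splits into $\tau_h = 0$ and $\tau_v = 0$; but by Lemma \ref{nicelemma}, $\tau_h = 0$ forces $\tau_6 = 0$, which via $-2\tau_6 = H^{3/2}*_\om(\tau_v\w\Om^+)$ and injectivity of $\alpha \mapsto \alpha\w\Om^+$ on $\Lm^1$ already yields $\tau_v = 0$. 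Thus $\tau = 0$ reduces to $\tau_h = 0$, and since $d^{*_\om} = -*_\om d *_\om$ with $*_\om$ invertible, this is the same as $d*_\om(H^{1/2}\Om^+) = 0$.

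For the second part, assume $\tau = 0$, so in particular $d(H^{1/2}\Om^-) = 0$. The forward direction is immediate: if $(P^6,\om,\Om)$ is Calabi--Yau then $d\Om^- = 0$, whence $d(H^{1/2}\Om^-) = \tfrac{1}{2}H^{-1/2}dH\w\Om^- = 0$ forces $dH = 0$, by injectivity of $\alpha\mapsto\alpha\w\Om^-$ on $1$-forms. For the converse, assume $H$ is constant. Then $d\Om^- = 0$ follows from $d(H^{1/2}\Om^-)=0$, and Lemma \ref{nicelemma} with $dH=0$ gives $\gamma^1_6 = 0$, hence $(d\xi)^2_6 = 0$ from its defining relation; combined with $d\xi\w\om^2 = 0$ this puts $d\xi$ entirely in $\Lm^2_8$. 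Equation (\ref{torsu31}) then reads $d\Om^+ = -H^{-3/2}d\xi\w\om$, so Calabi--Yau reduces to showing $d\xi = 0$.

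The main obstacle is this last step, since the $SU(3)$-structure relations together with the Gibbons--Hawking equation (\ref{harmonicconditiong2}) do not by themselves constrain $(d\xi)^2_8$. My plan is to invoke a global Bochner-type argument. Because $\tau = 0$ the metric $g_\vp$ has holonomy in $G_2$ and is therefore Ricci-flat. The generator $Y$ preserves $\vp$ and hence $g_\vp$, so it is a Killing field with $|Y|^2_{g_\vp} = H^{-2}$ constant. The pointwise identity derived from Kostant's formula $\nabla^2 Y = R(Y,\cdot)\cdot$ and the relation $Y^j\nabla_i Y_j = \tfrac{1}{2}\nabla_i|Y|^2 = 0$ yields $|\nabla Y|^2 = \Ric(Y,Y) = 0$, so $Y$ is parallel. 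Consequently $Y^\flat$ is parallel and $\xi = H^2 Y^\flat$ satisfies $d\xi = 0$, whence $d\Om^+ = 0$ and $(P^6,\om,\Om)$ is Calabi--Yau.
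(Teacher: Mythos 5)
Your proof is correct, and the first part together with the forward direction of the second part coincide with the paper's argument (reduce $d*_\vp\vp=0$ to $\tau_h=0$ via Lemma \ref{nicelemma}, and extract $dH\w\Om^-=0$ from $d(H^{1/2}\Om^-)=0$ in the Calabi--Yau case). Where you genuinely diverge is the final step, proving $d\xi=0$ when $H$ is constant. The paper stays entirely inside the structure-equation formalism: since $(d\xi)^2_6=0$ one has $d\xi\in\Lm^2_8$, so $d\xi\w\Om^+=0$ by (\ref{lambda282}); differentiating this and substituting (\ref{torsu31}) with $dH=0$ gives $0=d\xi\w d\Om^+=-H^{-3/2}\,d\xi\w d\xi\w\om=H^{-3/2}\|d\xi\|^2_\om\vol_\om$, using that $\al\w\al\w\om=-\|\al\|^2_\om\vol_\om$ for $\al\in\Lm^2_8$. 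So your remark that the $SU(3)$ relations ``do not by themselves constrain $(d\xi)^2_8$'' is not accurate --- they kill it in two lines --- but your alternative is still valid: you import the theorem that torsion-free $G_2$-structures are Ricci-flat and apply the pointwise Bochner identity $\tfrac12\Delta|Y|^2=|\nabla Y|^2-\Ric(Y,Y)$ to the Killing field $Y$ of constant length, concluding $Y$ is parallel and hence $d\xi=H^2\,dY^\flat=0$. Both arguments are pointwise and need no compactness. The paper's route is more elementary and self-contained (it uses only the torsion decomposition already set up); yours is shorter once Ricci-flatness is granted and has the side benefit of showing $L^7$ is a local Riemannian product $S^1\times P^6$ directly, but it relies on heavier Riemannian machinery that the rest of the section deliberately avoids.
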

\begin{proof}
The first part follows from the equivalence between the conditions $d*_\vp\vp=0$ and $\tau=0$ and the fact that $d^{*_\om}(H^{1/2}\Om^+)=\tau_h=0$ implies (from Lemma \ref{nicelemma}) that $\tau_v=0$.
If furthermore $H$ is constant then from Lemma \ref{nicelemma} we have that $(d\xi)^2_6=0$ i.e. $d\xi=(d\xi)^2_8$. Differentiating the relation
\[d\xi \w \Om^+=0\]
from (\ref{lambda282}) and using (\ref{torsu31}) then shows that $\|d\xi\|_{\om}=0$ i.e. $L^7=S^1 \times P^6$. To complete the proof we need to show that if $(P^6,\om,\Om^+)$ is Calabi-Yau then $H$ is constant which follows immediately from
\[ 0=d{*_\om}(H^{\frac{1}{2}} \Om^+)=\frac{1}{2}H^{-1/2}dH\w \Om^-.\]
\end{proof}
Having encoded the data of a closed $S^1$-invariant $G_2$-structure in terms of the data on the quotient we derive the evolution equations for the data $(\om,\Om,H, \xi)$ under the Laplacian flow in the next section. 

\section{The $S^1$-invariant Laplacian flow}\label{LFsection}
\subsection{$S^1$-invariant flow equations}
Consider the Laplacian flow starting from an $S^1$-invariant closed $G_2$-structure. Then by the existence and uniqueness of the flow, at least in the compact case, it follows that this symmetry persists i.e the solution to the flow can be expressed as (\ref{g2form}) for as long as it exists cf. \cite[Corollary $6.7$]{Lotay2017}. Note also that since $\mathcal{L}_Y \ddt(\xi)=\ddt(\mathcal{L}_Y \xi)=0$ and $Y \ip \ddt(\xi)=0$ it follows that $\ddt(\xi)$ is a basic $1$-form i.e. it corresponds to a $1$-form on $P^6$. Thus, in the $S^1$-invariant setting, from (\ref{torsionformS1invt})  we see that (\ref{LF}) becomes equivalent to the following evolution equations on $(\om,\Om,H,\xi)$: 
\begin{gather}
	\ddt (\om) = -2dd^c(H^{-1})+2d(H^{-2} J\gamma^1_6), \label{omegaevolution}\\
	\ddt (\xi) \w \om + \ddt(H^{3/2}\Om^+)= -d*_\om d(H^{\frac{1}{2}}\Om^-)+2d\xi \w (d^c(H^{-1}) - H^{-2} J\gamma^1_6). \label{mixedevolution}
\end{gather}
We omit writing the dependence on $t$ to ease the notation.
\begin{Rem}
Observe that (\ref{omegaevolution}) agrees with the fact that since $\vp$ remains in its cohomology class so does $\om$. 
\end{Rem} 
\noindent The main result of this section can be summed up as follows:
\begin{Th}
Given a compact symplectic $SU(3)$-structure $(P^6,\om_0,\Om_0)$ together with the data of an $S^1$ bundle with connection $1$-form $\xi_0$ and positive function $H_0$ as in Theorem \ref{GHcalibratedG2}, then the coupled flow defined by
\begin{gather}
\ddt(\om)=-d\tau_v\label{omflow}\\
\ddt(H^{1/2}\Om^-)=\frac{1}{3}(H^{-3/2}\|\tau_h\|^2_\om+H^{3/2}\|\tau_v\|^2_\om)\Om^--H^{-1}*_\om(d\tau_h+d\xi \w \tau_v)\\
\ddt(\log(H))=\frac{1}{6}(H^{-2}\|\tau_h\|^2_\om+H\|\tau_v\|^2_\om)+\frac{1}{2}g_\om(d\tau_v,\om)\label{Hflow}\\
\ddt(\xi)=-*_\om((J\circ d\tau_v)\w (H^{3/2}\Om^-))\label{xiflow}
\end{gather}
where $\tau_h:=d^{*_\om}(H^{1/2}\Om^+)$ and $\tau_v:=2 d^c(H^{-1})-2H^{-2}J\gamma_6^1$ with $\gamma_6^1$ defined by $ \gamma^1_6 \w \Om^+=H^{-\frac{1}{2}}(d \xi)^2_6 \w \om $, admits short time existence and uniqueness for the initial data $(\om_0,\Om_0,\xi_0,H_0)$. Moreover, $(P^6,\om)$ stays symplectic and $d(H^{3/2}\Om^+)=-d\xi \w \om$ holds for as long as the flow exists.
\end{Th}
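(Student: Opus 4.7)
The strategy is to reduce the claim to the short-time existence and uniqueness theorem for the Laplacian flow on the compact total space $L^7$. By Theorem \ref{GHcalibratedG2} the initial data $(\om_0,\Om_0,\xi_0,H_0)$ determines a principal $S^1$-bundle $L^7\to P^6$ and a closed $S^1$-invariant $G_2$-structure $\vp_0=\xi_0\w\om_0+H_0^{3/2}\Om_0^+$. Since $P^6$ and $S^1$ are compact, so is $L^7$, so by \cite{BryantXu2011} there is a unique short-time Laplacian flow $\vp(t)$ with initial value $\vp_0$ on some interval $[0,\varepsilon)$, and by \cite[Corollary 6.7]{Lotay2017} this solution remains invariant under the generating vector field $Y$. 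For each $t$ the invariance lets me read off time-dependent data on $P^6$ via the Gibbons-Hawking decomposition (\ref{g2form}), with $H(t)=\|Y\|_{\vp(t)}^{-1}$, $\xi(t)=H(t)^2\, g_{\vp(t)}(Y,\cdot)$, and $(\om(t),\Om^+(t))$ the induced quotient $SU(3)$-structure.

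The remaining task is to verify that this quotient data satisfies exactly the coupled system (\ref{omflow})--(\ref{xiflow}). Substituting the torsion formula (\ref{torsionformS1invt}) into the reformulated flow equation (\ref{LF}) and separating the horizontal and $\xi$-linear parts reproduces (\ref{omegaevolution}) and (\ref{mixedevolution}); the horizontal piece then rewrites as (\ref{omflow}) once $\tau_v$ is identified via Lemma \ref{nicelemma}. For the other three equations I apply the analogous decomposition to Bryant's companion evolution (\ref{dualevolution}) for $*_\vp\vp=\tfrac12 H^2\om^2-\xi\w H^{1/2}\Om^-$. Using the pointwise identity $\|\tau\|_\vp^2=H^{-1}\|\tau_h\|_\om^2+H^2\|\tau_v\|_\om^2$ together with the splitting $\tau=\tau_h+\xi\w\tau_v$, the coefficient of $\om^2$ in $\ddt(*_\vp\vp)$ yields (\ref{Hflow}); the coefficient of $\xi\w\Om^-$, after subtracting the contribution of $\ddt(\log H)$ already determined, produces the stated $\ddt(H^{1/2}\Om^-)$ equation; and the $\xi\w\om^2$-part isolates the basic $1$-form $\ddt\xi$ and matches (\ref{xiflow}) after converting the resulting $\Lm^2_6$-valued expression via Lemma \ref{interchangelemma}.

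The preservation properties are immediate: differentiating the decomposition (\ref{g2form}) and using $d\vp(t)=0$ splits into the $\xi$-free identity $d\om(t)=0$ and the $\xi$-linear identity $d(H^{3/2}\Om^+)=-d\xi\w\om$, so both conditions propagate for the lifetime of the flow. Uniqueness on $P^6$ follows by reversing the construction: two solutions of the coupled system reassemble via Theorem \ref{GHcalibratedG2} into two $S^1$-invariant Laplacian flows on $L^7$ sharing the initial datum $\vp_0$, which must coincide. The main technical obstacle, though essentially algebraic once (\ref{torsionformS1invt}) and Lemma \ref{nicelemma} are in hand, is the careful book-keeping required to track all $SU(3)$-irreducible summands in $\ddt(*_\vp\vp)$ and to manipulate the $\gamma^1_6$-dependent terms into the canonical form in which $\tau_v$ and $\tau_h$ appear in (\ref{omflow})--(\ref{xiflow}).
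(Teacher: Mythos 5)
Your proposal follows essentially the same route as the paper: lift to the compact total space $L^7$, invoke short-time existence and uniqueness of the Laplacian flow together with the persistence of the $S^1$-symmetry \cite{Lotay2017}, and then verify that the quotient data obeys (\ref{omflow})--(\ref{xiflow}) by splitting $\ddt\vp=d\tau$ and $\ddt(*_\vp\vp)$ into horizontal and $\xi$-linear parts, which is exactly the content of the propositions following the theorem (the paper happens to extract $\ddt H$ from the metric/volume evolution and $\ddt\xi$ from (\ref{mixedevolution}) rather than from the decomposition of $\ddt(*_\vp\vp)$, but this is the same computation organised slightly differently). One small correction: the pointwise identity you quote should read $\|\tau\|^2_\vp=H^{-2}\|\tau_h\|^2_\om+H\|\tau_v\|^2_\om$ as in Lemma \ref{normlemma}, since $g_\vp=H^{-2}\xi^2+Hg_\om$; with your powers of $H$ the coefficients in (\ref{Hflow}) would not come out as stated.
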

\begin{proof}
To complete the proof we just need to show that (\ref{omflow})-(\ref{xiflow}) correspond to the  Laplacian flow, which is the content of the rest of this section. The existence and uniqueness of the flow is then immediate from that of the Laplacian flow \cite{Lotay2017}. Since $d\vp=0$ as long as the flow exists it follows that $d\om=0$ and $d(H^{3/2}\Om^+)=-d\xi \w \om$.
\end{proof}
Note that in view of Theorem \ref{GHcalibratedG2} we already know that the evolution equation for $\xi$ is a consequence of (\ref{omflow})-(\ref{Hflow}). This can also be seen by inspecting expression (\ref{xiflow}). Before deriving the evolution equations for the data $(\om,\Om,\xi,H,\vol_\om)$ on $P^6$ we first give expressions for quantities that will appear in the evolution equations.
\begin{Lemma}\label{normlemma}The norms of the torsion forms can be expressed as
	\begin{enumerate}
		\item $ \|\gamma_6^1\|^2_\om=\frac{1}{2}H^{-1}\|(d\xi)^2_6 \|^2_\om  $
		\item $
		\|\tau \|^2_\vp=H^{-2}(\|\tau_8 \|^2_\om + 3\|\tau_6 \|^2_\om) =H^{-2}\|\tau_h\|^2_\om+H\|\tau_v\|^2_\om$
		\item $\|\tau_6\|^2_\om=2H^{-1}\|dH+\gamma^1_6\|^2_\om$
		\item $\|d\Om^- \|^2_\om=H^{-1}\|\tau_8 \|^2_\om+H^{-2}(\frac{9}{2}\|dH \|^2_\om+2\|\gamma_6^1 \|^2_\om +6 g_\om(dH,\gamma_6^1) ) $
	\end{enumerate}
\end{Lemma}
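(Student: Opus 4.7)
The four identities all reduce to Lemma \ref{interchangelemma} combined with the elementary identity $\al \w J\al \w \om^2 = -2\|\al\|^2_\om\,\vol_\om$ for $\al \in \Lm^1$, which is readily verified on a $J$-adapted orthonormal coframe. A central auxiliary observation is that the isomorphism $\Lm^1_6 \to \Lm^2_6$ given by $\al \mapsto \beta := *_\om(\al\w\Om^-)$ satisfies $\|\beta\|^2_\om = 2\|\al\|^2_\om$; this follows because $\beta\w\om = *_\om\beta$ by (\ref{lambda26}), whence by parts (1) and (2) of Lemma \ref{interchangelemma},
\begin{equation*}
\|\beta\|^2_\om\,\vol_\om \;=\; \beta \w *_\om\beta \;=\; \beta \w (\al \w \Om^-) \;=\; \al \w \beta \w \Om^- \;=\; 2\,\al \w *_\om\al \;=\; 2\|\al\|^2_\om\,\vol_\om.
\end{equation*}

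For part (1), set $\al := J\gamma_6^1$. Lemma \ref{interchangelemma}(1) then gives $\beta\w\om = J\al\w\Om^+ = -\gamma_6^1\w\Om^+ = -H^{-1/2}(d\xi)^2_6\w\om$ by the definition of $\gamma_6^1$, so (\ref{lambda26}) forces $\beta = -H^{-1/2}(d\xi)^2_6$, and the norm identity above yields (1). For part (3), set $\al := d^cH + J\gamma_6^1$ so that $\tau_6 = H^{-1/2}*_\om(\al\w\Om^+)$. Substituting $\al \mapsto -J\al$ in Lemma \ref{interchangelemma}(1) gives $\al\w\Om^+ = -J\al\w\Om^-$, so $\tau_6 = -H^{-1/2}*_\om(J\al\w\Om^-)$ is the $\Lm^2_6$-image of $-H^{-1/2}J\al$ under the above isomorphism. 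Lemma \ref{interchangelemma}(3) then provides $\tau_6\w\Om^+ = -H^{-1/2}J\al\w\om^2$, and using $*_\om^2 = \mathrm{id}$ on $\Lm^4$,
\begin{equation*}
\|\tau_6\|^2_\om\,\vol_\om \;=\; \tau_6 \w *_\om \tau_6 \;=\; H^{-1/2}\,\al\w\tau_6\w\Om^+ \;=\; -H^{-1}\,\al\w J\al\w\om^2 \;=\; 2H^{-1}\|\al\|^2_\om\,\vol_\om,
\end{equation*}
and the $J$-invariance of $g_\om$ gives $\|\al\|^2_\om = \|dH + \gamma_6^1\|^2_\om$, proving (3).

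For part (2), the metric $g_\vp = H^{-2}\xi^2 + Hg_\om$ rescales horizontal 2-forms by a factor $H^{-2}$ in squared norm and rescales $\xi\w\tau_v$ by $\|\xi\|^2_\vp\cdot\|\tau_v\|^2_\vp = H^2\cdot H^{-1} = H$, and these two pieces are $g_\vp$-orthogonal, so $\|\tau\|^2_\vp = H^{-2}\|\tau_h\|^2_\om + H\|\tau_v\|^2_\om$, which is the second equality. For the first, the $SU(3)$-orthogonal splitting $\tau_h = \tau_6 + \tau_8$ together with the expression $\tau_v = -2H^{-2}(d^cH + J\gamma_6^1)$ from Lemma \ref{nicelemma} and part (3) yield $H\|\tau_v\|^2_\om = 4H^{-3}\|dH+\gamma_6^1\|^2_\om = 2H^{-2}\|\tau_6\|^2_\om$, whence $H^{-2}\|\tau_h\|^2_\om + H\|\tau_v\|^2_\om = H^{-2}(\|\tau_8\|^2_\om + 3\|\tau_6\|^2_\om)$.

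Finally for part (4), substituting $\tau_v = -2\beta_6$ (from the proof of Lemma \ref{nicelemma}) into (\ref{omminustorsion}) simplifies the coefficient of $\Om^+$ to $A := -H^{-1}(\tfrac{3}{2}d^cH + J\gamma_6^1)$, yielding $d\Om^- = A\w\Om^+ + H^{-1/2}\tau_8\w\om$. Under $*_\om$ the first summand lands in $\Lm^2_6$ (by the identification used in part (3)) while the second satisfies $*_\om(\tau_8\w\om) = -\tau_8 \in \Lm^2_8$, so these are $L^2$-orthogonal (equivalently $\tau_8\w\Om^+ = 0$ by (\ref{lambda282})) and all cross terms in $\|d\Om^-\|^2_\om\,\vol_\om$ vanish. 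The argument of part (3) gives $\|A\w\Om^+\|^2_\om = 2\|A\|^2_\om$, and expanding $\|A\|^2_\om = H^{-2}\|\tfrac{3}{2}dH + \gamma_6^1\|^2_\om$ by the binomial formula produces the bracketed expression of (4), while the second summand contributes $\|H^{-1/2}\tau_8\w\om\|^2_\om = H^{-1}\|\tau_8\|^2_\om$. The only delicate point throughout is consistent use of Lemma \ref{interchangelemma} to translate between $\Lm^1_6$ and $\Lm^2_6$ without accumulating sign errors.
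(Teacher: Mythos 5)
Your proof is correct and follows essentially the same route as the paper's: a direct computation combining Lemma \ref{interchangelemma} with the definitions of $\gamma_6^1$, $\tau_6$, $\tau_8$ and $\tau_v$, where the paper carries out only part (1) explicitly and dismisses (2)--(4) as ``similar computations''. Your norm-doubling observation for the map $\al\mapsto *_\om(\al\w\Om^-)$ is precisely the content of the paper's one-line calculation for (1), packaged so that it can be reused verbatim in parts (3) and (4).
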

\begin{proof}
	The proof is a direct calculation using the expressions from the previous section. We prove ($1$) as an example:
	\[\gamma^1_6\w*_\om\gamma^1_6=\frac{1}{2}H^{-1/2}\gamma^1_6\w(d\xi)^2_6\w\Om^+=\frac{1}{2}H^{-1}(d\xi)^2_6\w *_\om(d\xi)^2_6,\]
	where the first equality follows from ($2$) of Lemma \ref{interchangelemma} and the definition of $\gamma^1_6$. The second equality is again just by the definition of $\gamma^1_6$. The proofs for the rest follow by similar computations. 
\end{proof}

\begin{Prop}\label{PropEvolutionofxi}The evolution equation for the connection form $\xi$ is given by
	\begin{align}\label{evolutionofxi}
		\ddt (\xi)  = -2H^{3/2}*_\om(J(dd^c(H^{-1})-d(H^{-2}J\gamma_6^1))\w \Om^-)
	\end{align}
\end{Prop}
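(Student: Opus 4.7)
The plan is to isolate $\ddt\xi$ from the horizontal component~\eqref{mixedevolution} of the $S^1$-invariant Laplacian flow, which (after writing $\tau_h = -*_\om d(H^{1/2}\Om^-)$) reads
$\ddt\xi\w\om + \ddt(H^{3/2}\Om^+) = d\tau_h + d\xi\w\tau_v$.
Both sides are $3$-forms on $P^6$; the task is to extract the $\Lm^1$-valued factor $\ddt\xi$ from its image in $\Lm^3_6$ under the Lefschetz-type isomorphism $\al\mapsto\al\w\om$.

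First I would wedge this equation with $\om$, turning it into a $5$-form identity. The term $\ddt\xi\w\om^2$ falls under the pointwise identity $*_\om(\al\w\om^2) = -2J\al$ for $\al\in\Lm^1$ (verifiable in any local unitary frame), while $\ddt(H^{3/2}\Om^+)\w\om$ collapses via the differentiated compatibility $\om\w\Om^+ = 0$ combined with the already-established evolution $\ddt\om = -d\tau_v$ from \eqref{omflow}, giving $\ddt\Om^+\w\om = d\tau_v\w\Om^+$ and hence $\ddt(H^{3/2}\Om^+)\w\om = H^{3/2}d\tau_v\w\Om^+$.

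Applying $*_\om$ to the resulting $5$-form identity and invoking Lemma~\ref{interchangelemma} to convert wedge products with $\Om^+$ into wedge products with $\Om^-$ under the $J$-action (concretely, the $\Lm^2_6$-component of $d\tau_v$ satisfies $*_\om(d\tau_v\w\Om^+) = J{*_\om}(d\tau_v\w\Om^-)$) yields a relation of the schematic form $J(\ddt\xi) = (\text{horizontal 3-form terms}) + H^{3/2}\cdot(J\text{-twist of wedge with }\Om^-)$. Post-composition with $J$ and $J^2 = -1$ on $\Lm^1$ then inverts this to an explicit formula for $\ddt\xi$.

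The main obstacle is the final collapsing step: showing that the contribution coming from $(d\tau_h + d\xi\w\tau_v)\w\om$ combines with the $d\tau_v\w\Om^+$ term to produce a single clean expression, namely $-H^{3/2}{*_\om}((J\circ d\tau_v)\w\Om^-)$ as in~\eqref{xiflow}. This requires the constraint $d(H^{3/2}\Om^+) = -d\xi\w\om$ (a consequence of $d\vp = 0$), identity~\eqref{div}, and the sharper form of Lemma~\ref{nicelemma} relating $\tau_6$, $\tau_v$ and $\gamma^1_6$. Once~\eqref{xiflow} is established, substituting $\tau_v = 2d^c(H^{-1}) - 2H^{-2}J\gamma^1_6$ and computing $d\tau_v = 2\bigl(dd^c(H^{-1}) - d(H^{-2}J\gamma^1_6)\bigr)$ yields~\eqref{evolutionofxi} after factoring the $2$ outside $*_\om$.
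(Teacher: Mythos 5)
Your plan follows the same route as the paper's proof: wedge \eqref{mixedevolution} with $\om$, eliminate $\ddt(H^{3/2}\Om^+)\w\om$ by differentiating the compatibility $\om\w\Om^+=0$ together with $\ddt\om=-d\tau_v$, and invert $\al\mapsto\al\w\om^2$ via Lemma~\ref{interchangelemma}. The identities you quote for those steps (e.g.\ $*_\om(\al\w\om^2)=-2J\al$, and $\ddt(H^{3/2}\Om^+)\w\om=H^{3/2}\,d\tau_v\w\Om^+$) are correct. But the step you yourself flag as ``the main obstacle'' is exactly the mathematical content of the proposition, and you do not supply it. The needed argument is short but specific: using $d\xi\w\om=-d(H^{3/2}\Om^+)$ one rewrites
\[
(d\tau_h+d\xi\w\tau_v)\w\om - d\tau_v\w H^{3/2}\Om^+ \;=\; d\big(\tau_h\w\om+\tau_v\w H^{3/2}\Om^+\big)-2\,d\tau_v\w H^{3/2}\Om^+,
\]
and the exact term on the right vanishes because Lemma~\ref{nicelemma}, in the sharp form $\tau_v=-2\beta_6$ (equivalently $\tau_v\w H^{3/2}\Om^+=-2\tau_6\w\om$), gives $\tau_h\w\om+\tau_v\w H^{3/2}\Om^+=(\tau_8-\tau_6)\w\om$, whose differential is killed by \eqref{t68}, i.e.\ by $d^{*_\om}\tau_h=0$.

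Without this cancellation your schematic relation for $J(\ddt\xi)$ still contains the unprocessed term $(d\tau_h+d\xi\w\tau_v)\w\om$, so the plan does not yet yield \eqref{evolutionofxi}. Moreover, the ingredient list you give for closing the gap points to \eqref{div}, which controls the $\langle\Om^-\rangle$-component of $d\tau_h$ and plays no role after wedging with $\om$; what you actually need is \eqref{t68}, which controls the $\Lm^3_6$-component --- the only part of $d\tau_h$ that survives the map $\gamma\mapsto\gamma\w\om$. Once the cancellation is in place, your final substitution $d\tau_v=2\big(dd^c(H^{-1})-d(H^{-2}J\gamma_6^1)\big)$ does recover the stated formula, in agreement with \eqref{xiflow}.
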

\begin{proof}
	From (\ref{mixedevolution}) we have
	\[\ddt(\xi)\w \om^2 = (d\tau_h+d\xi \w \tau_v)\w \om-\ddt(H^{3/2}\Om^+)\w \om.\]
	Taking the time derivative of the relation $H^{\frac{3}{2}}\Om^+\w \om=0$ and using (\ref{omegaevolution}) we get
	\[\ddt(\xi)\w \om^2 = (d\tau_h+d\xi \w \tau_v)\w \om-d\tau_v\w (H^{3/2}\Om^+).\]
Since $d\xi \w \om=-d(H^{3/2}\Om^+)$, we can rewrite the above as
	\begin{equation}
	\ddt(\xi)\w \om^2 = d(\tau_h\w \om + \tau_v\w H^{3/2}\Om^+)-2 d\tau_v\w (H^{3/2}\Om^+).\label{xievol}
   \end{equation}
From Lemma \ref{nicelemma} and (\ref{t68}) we see that 
\[
d(\tau_h\w \om + \tau_v\w H^{3/2}\Om^+)=d(\tau_8-\tau_6)\w \om=0.
\]
Finally using $(2)$ of Lemma \ref{interchangelemma} we can rewrite (\ref{xievol}) as (\ref{evolutionofxi}).
\end{proof}
\begin{Prop}
	\begin{align}
		\ddt(\vol_\om)&=-d\big((d^c(H^{-1})-H^{-2}J(\gamma_6^1)) \w \om ^2\big)\\
		&=-2g_{\om}(dd^c(H^{-1})-d(H^{-2}J(\gamma_6^1)),\om)\vol_\om
	\end{align}
\end{Prop}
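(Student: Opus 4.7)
The plan follows directly from the evolution equation (\ref{omegaevolution}) for $\om$, combined with the fact that $\om$ stays closed under the flow (which is part of the theorem just proved). First I would differentiate $\vol_\om = \frac{1}{6}\om^3$ in time to obtain
\[\ddt(\vol_\om) = \frac{1}{2}\om^2 \w \ddt(\om),\]
and then substitute (\ref{omegaevolution}) for $\ddt(\om)$, yielding two terms of the form $\om^2 \w dd^c(H^{-1})$ and $\om^2 \w d(H^{-2}J\gamma_6^1)$ (with appropriate signs).

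For the first displayed equality, since $d\om=0$ implies $d(\om^2)=0$, each of these two terms can be rewritten as a total exterior derivative, e.g.\ $\om^2 \w dd^c(H^{-1}) = d(\om^2 \w d^c(H^{-1}))$ (the boundary term in Leibniz vanishes), and similarly for the $\gamma_6^1$ piece. Collecting the two exact pieces and pulling $\om^2$ to the right (permissible since $\om^2$ is an even-degree form and hence commutes with $1$-forms under wedge) produces exactly the claimed primitive $-d\bigl((d^c(H^{-1}) - H^{-2}J(\gamma_6^1))\w\om^2\bigr)$.

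For the second equality I would invoke the pointwise identity $\al \w \om^2 = 2\,g_\om(\al,\om)\vol_\om$ valid for any $2$-form $\al$ on $P^6$, which follows from $*_\om\om = \tfrac{1}{2}\om^2$. Expanding $d$ in the expression just obtained (again using $d\om^2=0$) recovers $(dd^c(H^{-1}) - d(H^{-2}J(\gamma_6^1)))\w\om^2$, and applying the identity with this choice of $\al$ converts this wedge into the desired inner-product form. There is no genuine obstacle: the entire argument is a short algebraic manipulation once (\ref{omegaevolution}) and the closedness of $\om$ are available.
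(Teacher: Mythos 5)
Your argument is correct and is exactly the paper's (one-line) proof, fleshed out: differentiate $\vol_\om=\tfrac16\om^3$, substitute (\ref{omegaevolution}), use $d\om=0$ to pull out the exterior derivative, and use $*_\om\om=\tfrac12\om^2$ to convert the wedge into the inner product. Nothing to add.
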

\begin{proof}
	This follows directly from (\ref{omegaevolution}) and the fact that $\vol_\om=\frac{1}{6}\om^3. $
\end{proof}
\begin{Prop}\label{PropEvolutionofH}
\begin{align}\label{evolutionofH}
\ddt(H)=&-H^{-1}d^{*_{\om}}d(H)-2H^{-2}g_{\om}(dH,\gamma_6^1)-H^{-2}\|dH \|^2_\om\\
&+\frac{1}{6}H^{-1}\|\tau_8\|^2_\om+\frac{1}{2}H^{-3}\|(d\xi)^2_8\|^2_\om\nonumber
\end{align}
We can also expressed the above more compactly as
\begin{equation}\label{evolutionofHcompact}
\ddt(\log(H))=\frac{1}{6}\|\tau\|^2_\vp+g_{\om}(dd^c(H^{-1})-d(H^{-2}J(\gamma_6^1)),\om).\end{equation}
\end{Prop}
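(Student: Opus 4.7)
The plan is to extract the evolution of $H$ from the volume form evolution \eqref{volumeevolution}, exploiting the explicit product-like structure of the $G_2$ volume form. First I would verify the algebraic identity
\[
\vol_\vp = H^2\, \xi \w \vol_\om,
\]
which follows by direct computation of $\vp \w *_\vp\vp = 7\,\vol_\vp$ using \eqref{compatibilitysu3}, \eqref{normalisationsu3}, and the given expressions for $\vp$ and $*_\vp\vp$. Differentiating this identity in $t$, one notes that $\ddt(\xi)$ is a basic $1$-form (it is horizontal and $S^1$-invariant, as in the setup of Section \ref{LFsection}), so $\ddt(\xi)\w \vol_\om$ vanishes as a top-degree form with no vertical factor. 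Substituting the previous Proposition's formula for $\ddt(\vol_\om)$ together with \eqref{volumeevolution}, and dividing by $2H^2\,\xi \w \vol_\om$, one obtains the compact form \eqref{evolutionofHcompact} essentially for free.

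To pass from the compact form to the explicit form \eqref{evolutionofH}, I would expand $g_\om(dd^c(H^{-1}) - d(H^{-2}J\gamma_6^1), \om)$ pointwise via the identity $g_\om(\alpha,\om)\,\vol_\om = \tfrac{1}{2}\alpha \w \om^2$ for $2$-forms $\alpha$, combined with the identity $\beta \w \om^2 = 2*_\om(J\beta)$ from Lemma \ref{interchangelemma} for $1$-forms $\beta$ and with $d\om = 0$. This rewrites the expression as a linear combination of $d^{*_\om}dH$, $d^{*_\om}\gamma_6^1$, $\|dH\|^2_\om$, and $g_\om(dH,\gamma_6^1)$. Simultaneously, expanding $\tfrac{1}{6}\|\tau\|^2_\vp$ via parts $(2)$ and $(3)$ of Lemma \ref{normlemma} produces $\tfrac{1}{6}H^{-1}\|\tau_8\|^2_\om$ together with terms involving $\|dH\|^2_\om$, $\|\gamma_6^1\|^2_\om$, and $g_\om(dH,\gamma_6^1)$, plus the prefactor $H$.

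The main obstacle will be handling the surviving terms in $\|\gamma_6^1\|^2_\om$ and $d^{*_\om}\gamma_6^1$, which must recombine into $\tfrac{1}{2}H^{-3}\|(d\xi)^2_8\|^2_\om$. The key input is a Bianchi-type identity coming from $d(d\xi) = 0$: differentiating the defining relation $H^{1/2}\gamma_6^1 \w \Om^+ = (d\xi)^2_6 \w \om$ and using $d((d\xi)^2_6) + d((d\xi)^2_8) = 0$ together with $d\om=0$, one should be able to express $d^{*_\om}(H\gamma_6^1)$ in terms of $\|\gamma_6^1\|^2_\om$, $g_\om(dH,\gamma_6^1)$, and $\|(d\xi)^2_8\|^2_\om$ (using part $(1)$ of Lemma \ref{normlemma} to relate $\|\gamma_6^1\|^2_\om$ and $\|(d\xi)^2_6\|^2_\om$). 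Everything else amounts to a careful but routine bookkeeping of integration-by-parts terms; isolating the $\|(d\xi)^2_8\|^2_\om$ contribution cleanly is the only genuinely delicate step.
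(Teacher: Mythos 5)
Your proposal is correct, but it runs the argument in the opposite direction from the paper. The paper derives the explicit formula \eqref{evolutionofH} first, starting from $H=g_\vp(Y,Y)^{-1/2}$ and the metric evolution \eqref{metricevolution2}, which reduces the problem to evaluating $\tfrac{1}{2}j(d\tau)(Y,Y)=*_\vp(\om\w\om\w\xi\w d(H^{-2}d^cH+H^{-2}J\gamma^1_6))$; the compact form \eqref{evolutionofHcompact} is then obtained exactly as you propose, from \eqref{volumeevolution}, $\vol_\vp=H^2\xi\w\vol_\om$ and the evolution of $\vol_\om$ (the paper only sketches this as the ``more direct'' route for the second identity). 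You instead take the volume route as primary and recover \eqref{evolutionofH} by expanding $\tfrac16\|\tau\|^2_\vp$ via parts $(2)$--$(3)$ of Lemma \ref{normlemma} and the divergence terms via Lemma \ref{interchangelemma}; I checked that this bookkeeping closes, and the two approaches are genuinely equivalent in content: both hinge on the same identity $d^{*_\om}\gamma_6^1=2H^{-1}g_\om(dH,\gamma_6^1)+H^{-1}\|\gamma_6^1\|^2_\om-\tfrac12 H^{-2}\|(d\xi)^2_8\|^2_\om$, which is where the $\|(d\xi)^2_8\|^2_\om$ term enters. What your route buys is that the compact form comes essentially for free and the only nontrivial work is isolated in that one identity; what the paper's route buys is that it never has to invert the passage from $\ddt(\log H)$ back to the irreducible pieces.

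One small correction on the delicate step: differentiating the defining relation $(d\xi)^2_6\w\om=H^{1/2}\gamma_6^1\w\Om^+$ produces a $5$-form identity, which does not directly yield the scalar identity you need. The relation to differentiate is its Lemma \ref{interchangelemma}-equivalent top-degree precursor, namely $(d\xi)^2_6\w\Om^+=-H^{1/2}J\gamma_6^1\w\om^2$; then
\begin{equation*}
d\bigl(H^{1/2}J\gamma_6^1\w\om^2\bigr)=-d\bigl((d\xi)^2_6\w\Om^+\bigr)=-d\xi\w d\Om^+,
\end{equation*}
where the second equality uses $d(d\xi)=0$ and $(d\xi)^2_8\w\Om^+=0$. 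Evaluating the right-hand side with \eqref{omplustorsion} gives $\bigl(3H^{-1/2}g_\om(dH,\gamma_6^1)+2H^{-1/2}\|\gamma_6^1\|^2_\om-H^{-3/2}\|(d\xi)^2_8\|^2_\om\bigr)\vol_\om$, which is precisely the identity for $d^{*_\om}\gamma_6^1$ quoted above. With that substitution your argument goes through.
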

\begin{proof}
Since $H=g_{\vp}(Y,Y)^{-1/2}$ we can use (\ref{metricevolution2}) to write down its evolution equation: 
\begin{equation}\ddt(H)=\frac{1}{6}H\|\tau\|^2_\vp-\frac{1}{4}H^3j(d\tau)(Y,Y).\label{evolutionofHproof}\end{equation}
From Lemma \ref{normlemma} we can express $\|\tau\|^2_\vp$ in terms of the torsion of the $SU(3)$-structure as
\begin{equation}
\|\tau\|^2_\vp=H^{-2}\|\tau_8\|^2_\om+6H^{-3}\|dH+\gamma_6^1\|^2_\om.\end{equation}
Thus, we only need to simplify the term 
\begin{equation}\frac{1}{2}j(d\tau)(Y,Y)=*_\vp(\om\w\om\w \xi\w d(H^{-2}d^cH+H^{-2}J\gamma^1_6)) \label{jdtau}\end{equation} 
which is straightforward to do, except for the term involving $d(J\gamma^1_6)$. Using Lemma \ref{interchangelemma} and (\ref{omplustorsion}) we compute 
\begin{align*}
d(H^{1/2}J\gamma_6^1\w\om\w\om)&=-d((d\xi)^2_6 \w\Om^+ )\\
&=-d\xi \w d\Om^+\\
&= J(\gamma_6^1)\w \om^2 \w (\frac{3}{2}H^{-1/2}dH+H^{-1/2}\gamma_6^1)-H^{-3/2}\|(d\xi)^2_8\|_\om^2\\
&=(3H^{-1/2}g_\om(dH,\gamma_6^1)+2H^{-1/2}\|\gamma_6^1\|^2_\om-H^{-3/2}\|(d\xi)^2_8\|^2_\om)\vol_\om.
\end{align*}
We also have that
\[dd^c H \w \om^2=d(d^cH\w \om^2)=-2d(*_\om dH)\]
and
\[dH\w d^cH \w \om^2=-2\|dH\|^2_\om \vol_\om.\]
Using the last three expressions and the fact that $*_\vp(\xi \w \vol_\om)=H^{-2}$, one can rewrite (\ref{jdtau}) in terms of data on $P^6$. Substituting all this in (\ref{evolutionofHproof}) completes the proof for the first equation. 
Rather than unwinding the various relations between the torsion forms given in the previous section, one can prove the second expression more directly using (\ref{volumeevolution}) together with the fact that $\vol_\vp=H^{2}\xi\w \vol_\om$ and the evolution equation for $\vol_\om$.
\end{proof}
\begin{Rem}
Observe that even if $H$ is initially constant, i.e. the $S^1$ orbits have constant size, this is not generally preserved in time. Indeed from (\ref{evolutionofH}) we see that if $dH_0=0$ then $\partial_t(H)\big|_{t=0}\geq 0$, so the size of the $S^1$ orbit is expected to shrink initially.
\end{Rem}
\begin{Prop}The evolution equation for $\Om^+$ is given by
\begin{align}	
\ddt(\Om^+)=&-\Big(\frac{1}{4}\|\tau\|^2_\vp+\frac{3}{2}g_{\om}(dd^c (H^{-1})-d(H^{-2}J(\gamma_6^1)),\om)\Big)\Om^+\\ &+ H^{-3/2}\Big(dd^{*_\om}(H^{1/2}\Om^+)+2d\xi \w (d^c (H^{-1})-H^{-2}J(\gamma_6^1))\Big)\nonumber\\
&+2*_\om\Big(\big(J(dd^c(H^{-1})-d(H^{-2}J\gamma_6^1))\w \Om^-\big)\Big)\w \om.\nonumber
\end{align}
\end{Prop}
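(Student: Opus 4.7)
The plan is to derive this expression as a purely algebraic consequence of the mixed evolution equation (\ref{mixedevolution}) together with Propositions \ref{PropEvolutionofxi} and \ref{PropEvolutionofH}, by isolating $H^{3/2}\ddt(\Om^+)$ and substituting the previously-computed evolutions of $\xi$ and $H$. First I would rewrite the right-hand side of (\ref{mixedevolution}) using the identity $-*_\om d(H^{1/2}\Om^-)=\tau_h=d^{*_\om}(H^{1/2}\Om^+)$, which follows from $*_\om\Om^+=\Om^-$ together with $*_\om^2=+1$ on $4$-forms in six dimensions; this converts $-d*_\om d(H^{1/2}\Om^-)$ into $dd^{*_\om}(H^{1/2}\Om^+)$, which is already the first summand of the second line of the claim.

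Next, I would expand $\ddt(H^{3/2}\Om^+)=\tfrac{3}{2}H^{1/2}\ddt(H)\Om^++H^{3/2}\ddt(\Om^+)$ and solve (\ref{mixedevolution}) for $H^{3/2}\ddt(\Om^+)$, obtaining
\[
H^{3/2}\ddt(\Om^+)=-\tfrac{3}{2}H^{1/2}\ddt(H)\Om^+-\ddt(\xi)\w\om+dd^{*_\om}(H^{1/2}\Om^+)+2d\xi\w\bigl(d^c(H^{-1})-H^{-2}J\gamma_6^1\bigr).
\]
Dividing through by $H^{3/2}$ partitions the answer into three blocks corresponding precisely to the three lines of the claim.

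For the first line, substitute the compact form (\ref{evolutionofHcompact}), namely $\ddt(\log H)=\tfrac{1}{6}\|\tau\|^2_\vp+g_\om(dd^c(H^{-1})-d(H^{-2}J\gamma_6^1),\om)$; multiplying by $\tfrac{3}{2}$ produces exactly the claimed coefficient of $\Om^+$. The second line is already in the desired form after dividing the two flux terms by $H^{3/2}$. For the third line, substitute (\ref{evolutionofxi}) from Proposition \ref{PropEvolutionofxi} to rewrite $-H^{-3/2}\ddt(\xi)\w\om$ as $2*_\om\bigl(J(dd^c(H^{-1})-d(H^{-2}J\gamma_6^1))\w\Om^-\bigr)\w\om$, noting that the wedge with $\om$ is inherited unchanged from the original $\ddt(\xi)\w\om$ on the left-hand side of (\ref{mixedevolution}).

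The main obstacle is purely bookkeeping: keeping track of the Hodge-star signs $*_\om^2=(-1)^{k(6-k)}$ when passing between $d(H^{1/2}\Om^-)$, $\tau_h$, and $*_\om \tau_h$, and ensuring that the wedge with $\om$ from the left-hand side of (\ref{mixedevolution}) is correctly preserved in the third line rather than absorbed into a Hodge dual. Since all of the analytic content has already been packaged into the previous propositions, there is no new estimate or identity to prove here beyond this substitution; in particular, no projection onto the $SU(3)$-irreducible summands of $\Lm^3$ is required because the formula is left in the redundant but covariant form given above.
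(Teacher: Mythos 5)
Your proposal is correct and follows essentially the same route as the paper: the paper's proof likewise solves (\ref{mixedevolution}) for $\ddt(\Om^+)$ after expanding $\ddt(H^{3/2}\Om^+)$ and then substitutes (\ref{evolutionofxi}) and (\ref{evolutionofHcompact}). The only cosmetic difference is that the identification $-d*_\om d(H^{1/2}\Om^-)=dd^{*_\om}(H^{1/2}\Om^+)$ is immediate from the definition $d^{*_\om}=-*_\om d*_\om$ and $*_\om\Om^+=\Om^-$ (already recorded at (\ref{tauhdef})), so no separate sign bookkeeping with $*_\om^2$ is needed.
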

\begin{proof}
From (\ref{mixedevolution}) we have
\begin{align*}
	  \ddt(\Om^+)=&-\frac{3}{2}\ddt(\log(H))\cdot \Om^+ \\
	  &-H^{-3/2}\big(d*_\om d(H^{\frac{1}{2}}\Om^-)-2d\xi \w (d^c(H^{-1}) - H^{-2} J\gamma^1_6)\big)\\
	  &-H^{-3/2}\ddt (\xi) \w \om. 
\end{align*}
From Propositions \ref{PropEvolutionofxi} and \ref{PropEvolutionofH} we already know the evolution equations for $\xi$ and $H$. The result follows immediately by substituting in (\ref{evolutionofxi}) and (\ref{evolutionofHcompact}).
\end{proof}
\begin{Prop}The evolution equation for $\Om^-$ is given by
\begin{align}
\ddt(\Om^-) =\ &\frac{1}{4}\bigg( H^{-2}\|\tau_h\|^2_\om+H\|\tau_v\|^2_\om - g_\om\big(2dd^c(H^{-1})-2d(H^{-2} J\gamma^1_6), \om\big)  \bigg)\Om^-\nonumber\\ 
&- H^{-3/2}\Big(d^{*_\om}d(H^{1/2}\Om^-) 
+*_\om\big(d\xi \w (2 d^c(H^{-1})-2H^{-2}J\gamma_6^1)\big)\Big) 
\end{align}
\end{Prop}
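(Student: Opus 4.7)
My plan is to deduce the evolution equation for $\Om^-$ from the Laplacian flow of the Hodge dual, equation (\ref{dualevolution}), combined with the decomposition $*_\vp\vp=\tfrac{1}{2}H^2\om^2-\xi\w H^{1/2}\Om^-$. Differentiating this decomposition in time, the only term on the left-hand side containing a $\xi$ factor is $-\xi\w \ddt(H^{1/2}\Om^-)$, since $\ddt(\xi)$ is already known to be horizontal. Hence one can extract $\ddt(H^{1/2}\Om^-)$ by isolating the $\xi$-component of (\ref{dualevolution}); the remaining (horizontal) part merely recovers the already-established evolutions for $\om$ and $\vol_\om$.

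To identify the $\xi$-component of $*_\vp d\tau$, I use that for a basic $k$-form $\al$ on $P^6$, $*_\vp\al=(-1)^k H^{2-k}\,\xi\w *_\om\al$, whereas $*_\vp(\xi\w\beta)=H^{4-|\beta|}*_\om\beta$ is horizontal; both formulas follow from writing an orthonormal coframe for $g_\vp=H^{-2}\xi^2+Hg_\om$. Substituting $\tau=\tau_h+\xi\w\tau_v$ gives $d\tau=d\tau_h+d\xi\w\tau_v-\xi\w d\tau_v$, so the $\xi$-part of $*_\vp d\tau$ is exactly $-H^{-1}\xi\w*_\om(d\tau_h+d\xi\w\tau_v)$. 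Matching this in (\ref{dualevolution}) and stripping off the $\xi$-factor yields
\begin{equation*}
\ddt(H^{1/2}\Om^-)=\tfrac{1}{3}\|\tau\|^2_\vp\,H^{1/2}\Om^- - H^{-1}*_\om(d\tau_h+d\xi\w\tau_v),
\end{equation*}
which, via Lemma \ref{normlemma}, is precisely equation (\ref{omflow}) from the main Theorem.

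To conclude, I expand $\ddt(H^{1/2}\Om^-)=\tfrac{1}{2}H^{1/2}\ddt(\log H)\,\Om^-+H^{1/2}\ddt(\Om^-)$ and substitute Proposition \ref{PropEvolutionofH} for $\ddt(\log H)$. The $\Om^-$-coefficient then collapses via $\tfrac{1}{3}\|\tau\|^2_\vp-\tfrac{1}{12}\|\tau\|^2_\vp=\tfrac{1}{4}\|\tau\|^2_\vp$ together with Lemma \ref{normlemma}, reproducing the stated expression. The remaining term $H^{-3/2}*_\om d\tau_h$ is rewritten as $H^{-3/2}d^{*_\om}d(H^{1/2}\Om^-)$ by means of (\ref{tauhdef}): since $\tau_h=d^{*_\om}(H^{1/2}\Om^+)=-*_\om d(H^{1/2}\Om^-)$ (the second equality uses $*_\om\Om^+=\Om^-$ and $*_\om^2=-1$ on $3$-forms on $P^6$), the identity $d^{*_\om}=-*_\om d*_\om$ on the $6$-manifold gives $*_\om d\tau_h=d^{*_\om}d(H^{1/2}\Om^-)$.

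The main obstacle will be the sign and exponent bookkeeping in the $G_2$ Hodge star $*_\vp$: getting the powers of $H$ and the $(-1)^k$ factors right when decomposing $7$-forms into horizontal and vertical pieces is error-prone. Once the two Hodge formulas above are verified---for instance by checking that they reproduce the known identity $*_\vp\vp=\tfrac{1}{2}H^2\om^2-\xi\w H^{1/2}\Om^-$ when applied to $\vp=\xi\w\om+H^{3/2}\Om^+$---the rest of the proof is a direct substitution exercise guided by the evolution of $*_\vp\vp$.
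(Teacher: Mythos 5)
Your proposal is correct and follows essentially the same route as the paper: isolate the $\xi$-component of the evolution equation for $*_\vp\vp$ to obtain $\ddt(H^{1/2}\Om^-)=\tfrac{1}{3}\|\tau\|^2_\vp H^{1/2}\Om^--H^{-1}*_\om(d\tau_h+d\xi\w\tau_v)$, then substitute the evolution of $\log H$ and apply Lemma \ref{normlemma}; your explicit Hodge-star formulas simply spell out what the paper phrases as ``contracting both sides with $Y$''. (Only trivial slips: the intermediate equation is the second, unnumbered, equation of the main Theorem rather than (\ref{omflow}), and the identity $*_\om d\tau_h=d^{*_\om}d(H^{1/2}\Om^-)$ needs only $d^{*_\om}=-*_\om d*_\om$ and $*_\om\Om^+=\Om^-$, not $*_\om^2=-1$.)
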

\begin{proof}
Comparing terms in (\ref{dualevolution}) involving only $\xi$ i.e. contracting both sides of (\ref{dualevolution}) with $Y$, one has 
\[\ddt(H^{1/2}\Om^-)=\frac{1}{3}\|\tau\|^2_\vp H^{1/2}\Om^--H^{-1}*_\om(d\tau_h+d\xi \w \tau_v).\]
From the latter and using (\ref{evolutionofHcompact}) we compute
\begin{align*}
\ddt(\Om^-) =\ &\Big(\frac{1}{3}\|\tau\|^2_\vp-\frac{1}{2}\ddt(\log H)\Big) \Om^--H^{-3/2}*_\om(d\tau_h+d\xi \w \tau_v)\\
=\ &\Big(\frac{1}{4}\|\tau\|^2_\vp-\frac{1}{2}g_{\om}(dd^c(H^{-1})-d(H^{-2}J(\gamma_6^1)),\om)\Big) \Om^-\\ 
&-H^{-3/2}*_\om(d\tau_h+d\xi \w \tau_v).
\end{align*}
The result now follows from $(2)$ of Lemma \ref{normlemma} and using the definitions of $\tau_h$ and $\tau_v$.
\end{proof}
\begin{Prop}
\begin{align}
\ddt(g_\om)=&-\bigg(\frac{1}{2}\|\tau \|^2_\vp+\frac{1}{6}g_\om(d \tau_v,\om)-2H^{-3/2}(d\xi \w \tau_v + d\tau_h)^{3+}_1 \bigg)g_{\om}\label{metricevolution3}\\
&+\frac{1}{2}H^{-1}j( (d\xi \w \tau_v
+d\tau_h)^3_{12} + \xi \w(d\tau_v)^2_8) .\nonumber
\end{align}
\end{Prop}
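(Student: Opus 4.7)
The strategy is to derive $\ddt(g_\om)$ from Bryant's evolution (\ref{metricevolution2}) for $g_\vp$, using the pointwise decomposition $g_\vp = H^{-2}\xi^2 + Hg_\om$. Restricting to a pair of horizontal vectors $U,V$ (so that $\xi(U)=\xi(V)=0$) gives $g_\vp(U,V) = Hg_\om(U,V)$, and (\ref{metricevolution2}) then reduces to
\begin{equation*}
\ddt(H)\,g_\om + H\,\ddt(g_\om) \;=\; -\tfrac{1}{3}\|\tau\|_\vp^2\,Hg_\om + \tfrac{1}{2}\,j(d\tau)\big|_{\text{horiz}}.
\end{equation*}
Substituting $\ddt(\log H) = \tfrac{1}{6}\|\tau\|_\vp^2 + \tfrac{1}{2}g_\om(d\tau_v,\om)$ from Proposition \ref{PropEvolutionofH} (and the identification $\tau_v = 2d^c(H^{-1}) - 2H^{-2}J\gamma_6^1$) immediately accounts for the scalar term $-\bigl(\tfrac{1}{2}\|\tau\|_\vp^2 + \tfrac{1}{2}g_\om(d\tau_v,\om)\bigr)g_\om$, and all remaining work consists in re-expressing $\tfrac{1}{2}H^{-1}j(d\tau)|_{\text{horiz}}$ in terms of the $SU(3)$-data on $P^6$.

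For the $j$-term I would write $\tau = \tau_h + \xi\w\tau_v$, differentiate to $d\tau = d\tau_h + d\xi\w\tau_v - \xi\w d\tau_v$, and insert $U\ip\vp = -\xi\w(U\ip\om) + H^{3/2}(U\ip\Om^+)$ into $j(d\tau)(U,V) = *_\vp\bigl((U\ip\vp)\w(V\ip\vp)\w d\tau\bigr)$. Any purely horizontal $7$-form on $L^7$ is the pullback of a $7$-form on $P^6$ and therefore vanishes, so only the terms containing exactly one factor of $\xi$ survive; on such $7$-forms $*_\vp$ reduces to $*_\om$ times an appropriate power of $H$ via $\vol_\vp = H^2\,\xi\w\vol_\om$. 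The entire computation thus descends to $P^6$ and splits naturally according to whether the surviving $d\tau$-factor is the horizontal $3$-form $d\tau_h + d\xi\w\tau_v$ or the vertical $3$-form $-\xi\w d\tau_v$.

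The main obstacle is the final representation-theoretic bookkeeping that matches $j(d\tau)|_{\text{horiz}}$ with the specific $SU(3)$-summands appearing in the claim. The key observation is that the kernel $\Lm^3_7 = \{U\ip *_\vp\vp\,:\,U\in TL\}$ of $j$ decomposes under $SU(3)$ as $\langle\Om^-\rangle \oplus \Lm^1_6$, where the $\Lm^1_6$-summand sits diagonally in $\Lm^3_6 \oplus \xi\w\Lm^2_6$ via $U\mapsto U\ip *_\vp\vp$ for horizontal $U$. Consequently the $\Om^-$-component and the correlated $(\Lm^3_6,\,\xi\w\Lm^2_6)$-components of $d\tau$ are annihilated by $j$; the $\langle\Om^+\rangle$-component of $d\xi\w\tau_v+d\tau_h$, through $j(\Om^+)\propto g_\om$ on horizontal vectors, generates the trace correction $2H^{-3/2}(d\xi\w\tau_v+d\tau_h)^{3+}_1\,g_\om$; and the $\xi\w\langle\om\rangle$-piece of $\xi\w d\tau_v$ is proportional to $g_\om(d\tau_v,\om)$, producing exactly the shift that turns $\tfrac{1}{2}g_\om(d\tau_v,\om)$ into $\tfrac{1}{6}g_\om(d\tau_v,\om)$ in the coefficient of $g_\om$. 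What remains is precisely $\tfrac{1}{2}H^{-1}j\bigl((d\xi\w\tau_v+d\tau_h)^3_{12} + \xi\w(d\tau_v)^2_8\bigr)$, and collecting everything yields the stated formula.
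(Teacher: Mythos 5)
Your proposal is correct and follows essentially the same route as the paper: restrict Bryant's evolution equation for $g_\vp$ to the horizontal distribution, substitute the evolution of $\log H$, and then do the $SU(3)$-representation-theoretic bookkeeping for $j(d\tau)$, with the $\langle\Om^+\rangle$-piece giving the trace correction, $j(\Om^-)=0$, the $\xi\w\om$-piece shifting the coefficient of $g_\om(d\tau_v,\om)$ from $\tfrac12$ to $\tfrac16$, and the $\Lm^3_{12}$ and $\xi\w\Lm^2_8$ pieces surviving intact. Your phrasing of the bookkeeping via the kernel $\Lm^3_7\cong\langle\Om^-\rangle\oplus\Lm^1_6$ of $j$ is a mild repackaging of the paper's decomposition of $S^2(\R^7)$ into $SU(3)$-modules, but the content is the same.
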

\begin{proof}
The idea is to again use the evolution equation for $g_\vp$. Since $\ddt(g_\om)$ only evolves on the base $P^6$ we can ignore terms involving $\xi$ in (\ref{metricevolution2}). 
Thus, we have that 
\begin{equation} \ddt(g_\om)=-\ddt(\log H) g_\om -\frac{1}{3}\|\tau \|^2_\vp g_\om+\frac{1}{2}H^{-1}j(d\tau)\bigg|_{P^6}\label{evolg}
\end{equation}
As $SU(3)$ modules we have the following decomposition
\begin{align*}
\langle\vp \rangle\oplus \Lm^3_{27}(L)\cong S^2(\R^7)&=S^2(\R\oplus \R^6)\\
&=\langle\xi^2 \rangle\oplus (\xi \odot \R^6) \oplus S^2(\R^6)\\
&=\langle\xi^2 \rangle\oplus (\xi \odot \R^6) \oplus\langle g_\om \rangle \oplus S^2_0(\R^6)\\
&\cong\langle\xi^2 \rangle\oplus (\xi \odot \R^6) \oplus \langle g_\om \rangle \oplus \Lm^2_8(P)\oplus \Lm^3_{12}(P)
\end{align*}
By abuse of notation we are identifying the cotangent spaces of $P^6$ and $L^7$ with $\R^6$ and $\R^7$ in the above.
It follows that the only terms in $j(d\tau)$ that contribute to the evolution of $g_\om$ belong to the last $3$ summands. Since we have that
$d\tau = d\xi \w \tau_v + d\tau_h - \xi \w d\tau_v,$
the only terms that can arise in the evolution of $g_\om$ are the $\langle\Om^+ \rangle \oplus \langle\Om^- \rangle \oplus \Lm^3_{12}$ components of $d\xi \w\tau_v+d\tau_h$ which we write as
\begin{gather}
(d\xi \w \tau_v + d\tau_h)^{3+}_{1}\Om^++
(d\xi \w \tau_v + d\tau_h)^{3-}_{1}\Om^-+
(d\xi \w \tau_v + d\tau_h)^3_{12}
\end{gather}
	and the $\langle \om \rangle\oplus \Lm^2_8$ components of
	$d\tau_v=H^{-2}*_\om(*_\vp(\xi \w d\tau_v))$ which we can write as
	\begin{equation}
		\frac{1}{3}g_\om(d\tau_v,\om)\om+(d\tau_v)^2_8.
	\end{equation}
	A direct computation (in a $G_2$ coframe) shows that 
	$$j(H^{3/2}\Om^+)=4Hg_\om,$$
	$$j(H^{3/2}\Om^-)=0,$$
	$$j(\xi \w \om)=6H^{-2}\xi^2+2 H g_\om .$$
	Since the orthonormal symmetric tensors $j(\vp)=6g_\vp$ and $j(\xi \w \om-\frac{3}{4}H^{3/2}\Om^+)=6H^{-2}\xi^2-Hg_\om$ span the rank $2$ module $\langle \xi^2,g_\om \rangle$ it follows that as $SU(3)$ modules we have
	$$\Lm^3_{27} \cong \langle 6H^{-2}\xi^2-Hg_\om \rangle \oplus\langle\xi \odot v \rangle_{v\in T^*P} \oplus \Lm^2_8 \oplus \Lm^3_{12}.$$
	We now compute
	\begin{align*}
		j(d\tau)\Big|_{P^6} =&\ j\big((d\xi \w \tau_v
		+d\tau_h)^{3+}_{1}\Om^++(d\xi \w \tau_v
		+d\tau_h)^3_{12}\\ &+ \xi \w(\frac{1}{3}g_\om(d\tau_v,\om)\om+(d\tau_v)^2_8\big)\Big|_{P^6}\\
		=&\ 4H^{-1/2}(d\xi \w \tau_v
		+d\tau_h)^{3+}_{1}g_\om+\frac{2}{3}Hg_\om(d\tau_v,\om)g_\om\\
		&+ j( (d\xi \w \tau_v
		+d\tau_h)^3_{12} + \xi \w(d\tau_v)^2_8).
	\end{align*}
Substituting the latter and (\ref{evolutionofHcompact}) in (\ref{evolg}) gives the result.
\end{proof}
The reader might find the presence of the map $j$ in (\ref{metricevolution3}) rather unusual as the latter is strictly speaking a $G_2$-equivariant map but one can replace it by the corresponding $SU(3)$-equivariant map 
\[\ \iota\oplus\gamma: S^2_0(P)\cong \Lm^2_8(P) \oplus \Lm^3_{12}(P)\]
defined in \cite[Sect. 2.3]{Bedulli2007}. 
 To conclude this section we derive the evolution equations for certain types of differential forms on $(P^6,\om,\Om)$. 
\begin{Lemma}\
\begin{enumerate}
\item Let $\alpha=\alpha^2_6 + \alpha^2_8 \in \Lm^2_6 \oplus \Lm^2_8$  then
\[\ddt(\alpha) \w \om^2 =4 g_\om(dd^c(H^{-1})-d(H^{-2}J\gamma_6^1), \alpha^2_6 - \al^2_8).\]
\item Let $\alpha\in \langle \om \rangle \oplus\Lm^2_8$ then
\[\ddt(\alpha)\w \Om^-=H^{-3/2}\al \w *_\om(d\tau_h+d\xi \w \tau_v).\]
\item Let $\alpha\in \langle \Om^+\rangle \oplus \langle \Om^- \rangle \oplus \Lm^3_{12}$ then
\[\ddt(\alpha)\w \om=2 \al \w  (dd^c(H^{-1})-d(H^{-2}J\gamma_6^1)).\]
\item Let $\alpha\in  \langle \Om^- \rangle \oplus \Lm^3_6 \oplus \Lm^3_{12}$ then
\[\ddt(\alpha)\w \Om^-=H^{-3/2} \al \w  (d\tau_h + d\xi \w \tau_v).\]
\end{enumerate}
\end{Lemma}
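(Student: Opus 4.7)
My plan is to prove all four identities by a single three-step recipe: for each subspace, identify the defining vanishing wedge relation; differentiate it in $t$ via Leibniz; and substitute the evolution equation of $\om$ or $\Om^-$ from the preceding propositions.

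First I would record the vanishing relations. In (1), every $\alpha\in\Lm^2_6\oplus\Lm^2_8$ satisfies $\alpha\w\om^2=0$: the $\Lm^2_8$ piece vanishes by (\ref{lambda282}), and for $\alpha^2_6\in\Lm^2_6$ one combines $\alpha^2_6\w\om=*_\om\alpha^2_6$ (from (\ref{lambda26})) with the orthogonality $g_\om(\om,\alpha^2_6)=0$. In (2), $\langle\om\rangle\oplus\Lm^2_8=[\Lm^{1,1}]$ consists of real $(1,1)$-forms, so $\alpha\w\Om^-=0$ by the type argument $(1,1)\w(3,0)=0$ in complex dimension three. In (3), $\alpha\w\om=0$ on $\langle\Om^+\rangle\oplus\langle\Om^-\rangle\oplus\Lm^3_{12}$ by (\ref{compatibilitysu3}) and the defining property of $\Lm^3_{12}$. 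In (4), $\alpha\w\Om^-=0$ on $\langle\Om^-\rangle\oplus\Lm^3_6\oplus\Lm^3_{12}$ because $\Om^-\w\Om^-=0$ by odd-degree antisymmetry, $(\gamma\w\om)\w\Om^-=0$ for $\gamma\in\Lm^1_6$ via (\ref{compatibilitysu3}), and $\Lm^3_{12}\w\Om^-=0$ by definition.

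For (1) and (3), I would differentiate the corresponding relation and substitute $\ddt(\om)=-2(dd^c(H^{-1})-d(H^{-2}J\gamma^1_6))$ from (\ref{omegaevolution}). Formula (3) drops out immediately from Leibniz and a sign; for (1) the additional step is to rewrite $\alpha\w\om=*_\om(\alpha^2_6-\alpha^2_8)$ using (\ref{lambda26}) and (\ref{lambda28}), and then to invoke the $2$-form inner-product identity $\beta\w *_\om\gamma=g_\om(\beta,\gamma)\vol_\om$ (with the $4$-form $*_\om(\alpha^2_6-\alpha^2_8)$ commuting past the $2$-form $\beta$) to produce the pairing $4g_\om(\,\cdot\,,\alpha^2_6-\alpha^2_8)$.

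For (2) and (4), I would differentiate $\alpha\w\Om^-=0$ and substitute the Proposition's formula for $\ddt(\Om^-)$; the $\Om^-$-coefficient term is annihilated by the wedge with $\alpha$, leaving
\[ \ddt(\alpha)\w\Om^-=H^{-3/2}\,\alpha\w\bigl(d^{*_\om}d(H^{1/2}\Om^-)+*_\om(d\xi\w\tau_v)\bigr). \]
The main algebraic step, and the point where I expect to need the most care, is the Hodge-star rewriting $d^{*_\om}d(H^{1/2}\Om^-)=*_\om d\tau_h$; this follows from the definition $\tau_h=d^{*_\om}(H^{1/2}\Om^+)=-*_\om d(H^{1/2}\Om^-)$ in (\ref{tauhdef}), together with $*_\om\Om^+=\Om^-$ and the degree identities $(*_\om)^2=+1$ on $2$- and $4$-forms and $(*_\om)^2=-1$ on $3$-forms. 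Once this is in place, the right-hand side above collapses to $H^{-3/2}\alpha\w *_\om(d\tau_h+d\xi\w\tau_v)$, giving (2) directly and (4) after evaluating the resulting $6$-form pairing against the $SU(3)$-decomposition of $d\tau_h+d\xi\w\tau_v$.
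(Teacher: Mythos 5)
Your approach is exactly the paper's: the published proof consists of the single observation that each identity follows by differentiating the relevant vanishing relation ($\al\w\om^2=0$, $\al\w\Om^-=0$, $\al\w\om=0$ and $\al\w\Om^-=0$ respectively) and substituting the evolution equations for $\om$ and $\Om^-$; your write-up supplies the details the paper omits, and parts (1)--(3) are carried out correctly (in particular the identification $*_\om d\tau_h = d^{*_\om}d(H^{1/2}\Om^-)$ is right).

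The one point to flag is part (4). Differentiating $\al\w\Om^-=0$ gives, exactly as in your computation for (2),
\[
\ddt(\al)\w\Om^- \;=\; H^{-3/2}\,\al\w *_\om\big(d\tau_h+d\xi\w\tau_v\big),
\]
i.e.\ the \emph{starred} expression. The statement of (4) omits the $*_\om$, and your closing remark that the unstarred version follows ``after evaluating the $6$-form pairing against the $SU(3)$-decomposition'' does not actually bridge the gap: for $3$-forms $\al,\gamma$ on $P^6$ one has $\al\w *_\om\gamma=g_\om(\al,\gamma)\vol_\om$ while $\al\w\gamma=-g_\om(\al,*_\om\gamma)\vol_\om$ (since $*_\om^2=-1$ on $\Lm^3$), and because $*_\om$ acts as a complex structure on $\Lm^3_{12}$ these pairings differ in general. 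The honest output of your (correct) computation is the starred formula, matching (2); the absence of $*_\om$ in the statement of (4) is evidently a typo in the paper rather than something a further decomposition step can recover, so you should not try to manufacture that last conversion.
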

\begin{proof}
To prove (1)  we simply differentiate the relation $\alpha \w \om^2=0$ and use (\ref{omegaevolution}). The proofs for the rest are completely analogous. For (2) we differentiate $\al \w \Om^-=0$, for (3) we differentiate $\al \w \om=0$ and for (4) we differentiate $\al \w \Om^-=0$.
\end{proof}
These expressions can be quite useful for extracting the evolution equations for specific components of a given quantity. For instance we can apply (1) and (2) to $\tau_8$ to find the evolution equation for the component of $\ddt(\tau_8)$ in $\langle \om \rangle$ and $\Lm^2_6$ respectively. 
From (\ref{dualevolution}) one can also compute the evolution equation for $\tau$:
\begin{equation}
\ddt(\tau)\w \vp=-\tau \w d\tau+\frac{1}{3}d(\|\tau\|^2_\vp) \w *_\vp\vp+\frac{1}{3}\|\tau\|^2_\vp\tau \w \vp -d*_\vp d\tau.
\end{equation}
From this one can deduce the evolution equations for $\tau_v$ and $\tau_h$. The resulting expressions are rather involved so we won't write them down here.
\begin{Rem}
	The evolution equations derived in this section generalise those derived in \cite{Fino2017} in the special case that $L^7=S^1\times P^6$ is a warped product. Note however that their choice of $SU(3)$-structure $(P^6,\check{\om},\check{\Om})$ differs from ours by a conformal factor so that $(\check{\om},\check{\Om})=(H\om,H^{3/2}\Om)$. In particular, $\check{\om}$ is not symplectic but on the other hand with respect to $g_{\check{\om}}$, instead of $g_\om$, equation (\ref{evolutionofH}) becomes parabolic. 
	Since the induced flow on the data $(H,\xi,\Om)$ is still generally quite complicated we shall only study it in a couple of simple examples in the last section, which exclude their case i.e. not warped products.
\end{Rem}

\subsection{The $S^1$-invariant soliton equation} Having derived the general $S^1$-invariant flow equations the natural next step is to work out the soliton equation. But before doing so we first prove a non-existence result in the compact case. By compact we  always mean without boundary.
\subsubsection{Non-existence of compact solitons with continuous symmetry}
\begin{Prop}\label{theorem1}
	Let $\vp$ be an exact $G_2$-structure on a compact $7$-manifold $L^7$ so that $\vp=d\beta$ for $\beta \in \Lm^2(L^7)$. Then $(L^7,\vp)$ admits no non-trivial infinitesimal symmetry i.e.
	\[\mathcal{L}_X \vp =0 \text{\ \ \ \ if and only if \ \  } X \equiv 0. \]
\end{Prop}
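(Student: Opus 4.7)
The plan is to use the closedness of $\vp$ together with exactness and the pointwise identity (\ref{metricg2}) to reduce the problem to an integration by parts that yields a manifestly non-negative integrand.

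First, since $d\vp = 0$, Cartan's formula gives $\mathcal{L}_X \vp = d(X \ip \vp) + X \ip d\vp = d(X\ip\vp)$. So the hypothesis $\mathcal{L}_X \vp = 0$ is equivalent to saying that the $2$-form $\alpha := X \ip \vp$ is closed. Set $\vp = d\beta$ as given.

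Next, apply (\ref{metricg2}) with $U=V=X$ to get the pointwise identity
\begin{equation*}
\tfrac{1}{6}\,\alpha \w \alpha \w \vp \;=\; \|X\|^2_\vp\,\vol_\vp .
\end{equation*}
Now $\alpha \w \alpha$ is a closed $4$-form (since $d\alpha = 0$), so
\begin{equation*}
d\bigl(\alpha \w \alpha \w \beta\bigr) \;=\; \alpha \w \alpha \w d\beta \;=\; \alpha \w \alpha \w \vp \;=\; 6\,\|X\|^2_\vp\,\vol_\vp .
\end{equation*}
Stokes' theorem on the compact manifold $L^7$ (without boundary) then gives
\begin{equation*}
0 \;=\; \int_L d\bigl(\alpha \w \alpha \w \beta\bigr) \;=\; 6\int_L \|X\|^2_\vp\,\vol_\vp,
\end{equation*}
forcing $\|X\|_\vp \equiv 0$, hence $X \equiv 0$.

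There is no real obstacle here: both ingredients (the contraction identity and exactness of $\vp$) are already in hand, and the only mildly delicate point is the sign check $d(\alpha\w\alpha\w\beta) = (\alpha\w\alpha)\w d\beta$ which is correct because $\alpha \w \alpha$ has even degree $4$. No compactness of the symmetry group or regularity arguments are needed; the conclusion is purely a consequence of $d\vp=0$, exactness, and the definiteness of the form $\tfrac{1}{6}\alpha\w\alpha\w\vp$ encoding the metric norm of $X$.
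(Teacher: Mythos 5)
Your proof is correct and is essentially identical to the paper's: both use Cartan's formula plus $d\vp=0$ to conclude $d(X\ip\vp)=0$, then integrate $d\bigl((X\ip\vp)\w(X\ip\vp)\w\beta\bigr)$ by Stokes and invoke the contraction identity (\ref{metricg2}) to get $\int_L 6\|X\|^2_\vp\vol_\vp=0$. No differences worth noting.
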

\begin{proof}
	Suppose that $\mathcal{L}_X \vp=0$ then
	\begin{align*}
		0& =\int_L d ((X \ip \vp) \w (X \ip \vp) \w \beta )\\ 
		&= \int_L (X \ip \vp) \w (X \ip \vp) \w \vp \\
		&=\int_L 6 g_\vp(X,X)\vol_\vp \geq 0
	\end{align*}
	The first equality is from Stokes' Theorem, the second follows from the fact that $d(X \ip \vp)=\mathcal{L}_X \vp=0$ and the last is a consequence of (\ref{metricg2}). Hence $\|X\|_\vp$ is identically zero i.e. $X\equiv 0$.
\end{proof}
\begin{Cor}\label{expandingsoliton}
	A non-torsion free Laplacian soliton on a compact manifold $(L^7,\vp)$ admits no non-trivial infinitesimal symmetry.
\end{Cor}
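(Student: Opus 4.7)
The plan is to reduce the corollary directly to Proposition \ref{theorem1} by showing that a non-torsion free Laplacian soliton on a compact manifold must have an \emph{exact} defining $3$-form $\vp$. Once that is established, any infinitesimal symmetry $X$ satisfies $\mathcal{L}_X\vp=0$, and Proposition \ref{theorem1} forces $X\equiv 0$.

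First I would recall the known facts from \cite{Lotay2017} concerning compact solitons: shrinkers ($\lambda<0$) do not exist on compact manifolds, and steady ones ($\lambda=0$) are automatically torsion free. Hence the hypothesis that $(L^7,\vp)$ is a compact, non-torsion free soliton forces $\lambda>0$, i.e. the soliton is expanding. This is the only case that requires any argument.

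Next I would rewrite the soliton equation \eqref{solitonequationoriginal} using the fact that $\vp$ is closed. Since $d\vp=0$, the Hodge Laplacian reduces to $\Delta_\vp\vp=dd^{*_\vp}\vp=d\tau$, which is exact. Similarly, Cartan's magic formula gives $\mathcal{L}_V\vp=d(V\ip\vp)+V\ip d\vp=d(V\ip\vp)$, also exact. Therefore the soliton equation reads
\begin{equation*}
\lambda\,\vp=d\bigl(\tau-V\ip\vp\bigr),
\end{equation*}
and since $\lambda>0$ we may divide through to obtain $\vp=d\beta$ with $\beta=\lambda^{-1}(\tau-V\ip\vp)$. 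Thus $\vp$ is exact.

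Finally, Proposition \ref{theorem1} applies verbatim: any vector field $X$ on the compact manifold $L^7$ with $\mathcal{L}_X\vp=0$ must vanish identically. There is essentially no obstacle here; the only subtle point is making sure the trichotomy expanding/steady/shrinking is invoked correctly so that one really does know $\lambda>0$ (rather than $\lambda\ge 0$), which is where the compactness hypothesis does its work through \cite{Lotay2017}.
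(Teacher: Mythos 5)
Your proposal is correct and follows essentially the same route as the paper: both rewrite the soliton equation as $\lambda\vp=d(\tau-V\ip\vp)$, use the compactness result from \cite{Lotay2017} (that $\lambda\geq 0$ with equality exactly in the torsion free case) to conclude $\lambda>0$, deduce exactness of $\vp$, and then invoke Proposition \ref{theorem1}. The only cosmetic difference is that you spell out why $\Delta_\vp\vp=d\tau$ and $\mathcal{L}_V\vp=d(V\ip\vp)$, which the paper takes as read.
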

\begin{proof}
	Recall that the soliton equation is
	\[\Delta_\vp\vp=\lambda \vp + \mathcal{L}_V \vp\]
	for some $V \in \Gamma(TL^7)$, or equivalently
	\[\lambda \vp =d(\tau - V \ip \vp).\]
On a compact manifold, from \cite[Proposition 9.5]{Lotay2017} we know that $\lambda \geq 0$ with equality if and only if $\vp$ is torsion free. So we only need to consider the case when $\lambda>0$ and the result follows from the previous Proposition.
\end{proof}
This makes the construction of expanding solitons on compact manifolds quite a hard problem as one cannot use continuous symmetries to simplify the PDEs, so this suggests that one might have to use hard analysis to find examples (if any exist at all). 

More generally, let $\al:=\al^2_7 + \al^2_{14} \in \Lm^2\cong \Lm^2_7\oplus \Lm^2_{14}$ be a closed $2$-form on compact $L^7$ and $\vp =d \beta$. Then
\begin{equation}
	0=\int_L d(\al \w \al \w \beta)=\int_L \al \w \al \w \vp=\int_L \Big(2\|\al^2_7\|^2-\|\al^2_{14}\|^2 \Big)\vol_\vp. \label{blah}
\end{equation}
Observe that only the cohomology class of $\al$ is relevant here. In Proposition \ref{theorem1} we used the fact that if $\mathcal{L}_X \vp=0$ then we can set $\al = X \ip \vp \in \Lm^2_7$.  Note also that for any closed $G_2$-structure $\vp$ (not necessarily exact) we have the following one-to-one correspondences:
\begin{gather*}
\{X\in\Gamma(TL)\ |\ \mathcal{L}_X\vp=0 \}\xleftrightarrow{\al=X \ip \vp}\{\al\in \Lm^2_7\ | \ d\al=0 \} \xleftrightarrow{} \mathcal{H}^2_7:=\{\al\in \Lm^2_7\ | \ \Delta_\vp \al=0 \}\\
\{\al\in \Lm^2_{14}\ | \ d\al=0 \} \xleftrightarrow{} \mathcal{H}^2_{14}:=\{\al\in \Lm^2_{14}\ | \ \Delta_\vp \al=0 \}
\end{gather*}
Here we are using the fact that for $\al \in \Lm^2_7$, we  have that $2*_\vp \al=\al \w \vp$ and hence any closed $\al\in \Lm^2_7$ is also coclosed. Likewise the analogous argument applies for closed $\al \in \Lm^2_{14}$ since $*_\vp \al=-\al \w \vp$ . 

Although for \textit{torsion free} $G_2$-structures it is known that 
$$\mathcal{H}^2=\mathcal{H}^2_7 \oplus \mathcal{H}^2_{14}$$
where $\mathcal{H}^2$ denotes the space of harmonic $2$-forms \cite{Joycebook}, this is \textit{not} generally true for strictly closed $G_2$-structures. One way of seeing this is suppose $\al=\al^2_7 + \al^2_{14}\in \Lm^2_7 \oplus \Lm^2_{14}$ is a harmonic $2$-form on $(L^7,\vp)$ then we have
\[d(\al^2_{7})=-d(\al^2_{14})=-\frac{1}{7}g_\vp(\al,\tau)\vp + \gamma \in \langle \vp\rangle \oplus \Lm^3_{27}. \]
Thus, there could exist harmonic $2$-forms not in $\mathcal{H}^2_7 \oplus \mathcal{H}^2_{14}.$ In any case, we know that these spaces are finite dimensional vector spaces since from Hodge theory $\dim(\mathcal{H}^2)$ is the second Betti number of $L^7$.

So (\ref{blah}) says that in fact there does not exist any closed $2$-form strictly of type $\Lm^2_7$ or $\Lm^2_{14}$ on $(L^7,\vp=d\beta)$ and hence we deduce the following:
\begin{Th}\label{topoexact}
	If a compact manifold $L^7$ admits an exact $G_2$-structure $\vp=d\beta$ then $(L^7,\vp)$ has no non-trivial infinitesimal symmetry and it also does not have any closed (equivalently harmonic) $2$-form of pure type $14$ i.e. $\mathcal{H}^2_7 \oplus \mathcal{H}^2_{14}=0$. 
\end{Th}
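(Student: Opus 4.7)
The plan is to reassemble the two threads already developed in the text immediately preceding the statement. The symmetry claim is just a restatement of Proposition \ref{theorem1} applied to the exact $G_2$-structure $\vp=d\beta$, so I would simply cite it. The substantive content is the vanishing of $\mathcal{H}^2_7$ and $\mathcal{H}^2_{14}$, and the key ingredient is the integral identity \eqref{blah}, which I would streamline into a clean self-contained argument.

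First I would take an arbitrary closed $2$-form $\al$ on $L^7$ and decompose it as $\al=\al^2_7+\al^2_{14}$. Using the pointwise wedge identities $\al^2_7\w\vp=2*_\vp\al^2_7$ and $\al^2_{14}\w\vp=-*_\vp\al^2_{14}$, together with the orthogonality of $\Lm^2_7$ and $\Lm^2_{14}$ with respect to $g_\vp$, I would record the pointwise formula
\[
\al\w\al\w\vp=\bigl(2\|\al^2_7\|_\vp^2-\|\al^2_{14}\|_\vp^2\bigr)\vol_\vp.
\]
Since $d\al=0$ and $\vp=d\beta$, the $6$-form $\al\w\al\w\beta$ is a primitive of $\al\w\al\w\vp$, so Stokes' theorem gives $\int_L\al\w\al\w\vp=0$. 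Specializing to a closed $\al$ of pure type $\Lm^2_7$ forces $\int_L 2\|\al\|_\vp^2\vol_\vp=0$ and hence $\al\equiv 0$; specializing to a closed $\al$ of pure type $\Lm^2_{14}$ forces $\int_L\|\al\|_\vp^2\vol_\vp=0$ and hence again $\al\equiv 0$.

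To finish, I would invoke the identifications $\mathcal{H}^2_7\cong\{\al\in\Lm^2_7\mid d\al=0\}$ and $\mathcal{H}^2_{14}\cong\{\al\in\Lm^2_{14}\mid d\al=0\}$ recorded in the paragraph preceding the theorem: for $\al\in\Lm^2_7$ the relation $2*_\vp\al=\al\w\vp$ shows that closedness implies coclosedness, and likewise for $\al\in\Lm^2_{14}$ via $*_\vp\al=-\al\w\vp$. Combined with the previous step this yields $\mathcal{H}^2_7=\mathcal{H}^2_{14}=0$. No real obstacle appears because every ingredient is already assembled earlier in the excerpt; the only delicate point is being careful with the opposing signs of the operator $\al\mapsto *_\vp(\al\w\vp)$ on the two summands, which is exactly what makes the quadratic form in the key identity indefinite and therefore forces \emph{both} pure-type pieces to vanish when the integral is zero.
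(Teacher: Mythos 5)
Your proposal is correct and follows essentially the same route as the paper: the symmetry statement is Proposition \ref{theorem1}, and the vanishing of $\mathcal{H}^2_7\oplus\mathcal{H}^2_{14}$ is exactly the paper's identity (\ref{blah}) obtained by integrating $\al\w\al\w\vp=d(\al\w\al\w\beta)$ and specializing to pure-type closed forms, combined with the same closed-implies-coclosed identifications. No gaps.
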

In particular, this Theorem applies to compact expanding solitons. Another immediate consequence is that any $S^1$ bundle on $(L^7,\vp=d\beta)$ cannot admit a connection whose curvature $2$-form belongs to $\Lm^2_{14}$ or $\Lm^2_{7}$ i.e. it cannot be a $G_2$ instanton or anti-instanton respectively.
Note that it is still an important open problem whether a compact manifold can even admit an exact $G_2$-structure.
\begin{Rem}
In \cite{RafferoSymmetryofClosedG2} Podest\`a and Raffero used similar arguments to show that for closed $G_2$-structures on compact manifolds the Lie algebra of infinitesimal symmetry $\mathcal{H}^2_7$ is in fact abelian and of dimension at most $6$. They also exhibit an example showing that this bound is sharp. Theorem \ref{topoexact} shows that the exact case is very different from the closed one.
\end{Rem}

\subsubsection{The $S^1$-invariant soliton equation}
We now derive the equations for $S^1$-invariant Laplacian solitons in terms of the data on $P^6$. Note that in view of Corollary \ref{expandingsoliton} this only applies to the non-compact case. We now consider the soliton equation
\[-\lambda\vp= d(-\tau+V \ip \vp)\]
with $V\in \Gamma(TP^6)$ i.e. we assume that $V$ is a horizontal vector field on $L^7$. Then under the free $S^1$ action generated by the vector field $Y$ as before, this reduces to the pair
\begin{gather}
-\lambda \om = d(\tau_v + V \ip \om),\\
\lambda H^{3/2}\Om^+=d\tau_h + d\xi \w (\tau_v+ V \ip \om)-d(H^{3/2}V \ip \Om^+),
\end{gather}
with $\tau_v$ and $\tau_h$ as defined in section \ref{quotientofclosedg2structures}. Observe that if $\lambda \neq 0$ then the symplectic form $\om$ is necessarily exact (which implies that $P^6$ is non-compact and hence $L^7$ as we already noted), as it is for $\vp$.
If $V=\nabla f$ is the gradient vector field for some function $f$ on $(P^6,g_\om)$ then we can rewrite the soliton equation using Lemma \ref{nicelemma} and (\ref{tauhdef}) as
\begin{align}
	-\lambda \om &= dd^c(2H^{-1}-f) -2d(H^{-2}J\gamma_6^1),\\
	\lambda H^{3/2}\Om^+=\ &dd^{*_\om}(H^{1/2}\Om^+) + d\xi \w (d^c(2H^{-1}-f)-2H^{-2}J\gamma_6^1)\\ &-d*_\om(H^{3/2}d^c f \w \Om^+).\nonumber
\end{align}
We shall not attempt to solve this system here but we will give an example of a solution in the next section.
\section{Examples of $S^1$-invariant Laplacian flow}\label{examples}
\subsection{The Bryant-Fern\'andez example}\

The compact nilmanifold $L^7$ associated to the $2$-step nilpotent Lie algebra\footnote{Here we are using Salamon's notation \cite{Salamoncomplexstructures} to mean that the Lie algebra admits a coframing $e^i$ with $de^i=e^{jk}$, where $jk$ denotes the $i$th entry.} 
$$(0, 0, 0, 0, 0, 12, 13 )$$ 
admits a closed $G_2$-structure given by
\[\vp_0= e^{123}+e^{145}+e^{167}+e^{246}-e^{257}-e^{347}-e^{356}.\]
This example was discovered by Fern\'andez in \cite{Fernandezexample} and Bryant worked out the Laplacian flow on this example in \cite{Bryant06someremarks}. The solution to the Laplacian flow is given by
\[\vp_t= f^3e^{123}+e^{145}+e^{167}+e^{246}-e^{257}-e^{347}-e^{356} ,\]
where $f:=(\frac{10}{3} t+1)^{\frac{1}{5}}$ cf. \cite{Fernandez2016LF}. This solution is immortal and the volume grows as $\sim t^{1/5}$ in time. Bryant also showed that $L^7$ cannot admit a torsion free $G_2$-structure for topological reasons and hence one cannot expect the flow to converge. Nonetheless we have that $\| \tau_t \|^2_{\vp_t} = 2f^{-5}$ converges to zero as $t \to \infty$ and \cite[Theorem 4.2]{Fernandez2016LF} shows that $g_{\vp_t}$ converges in suitable sense to a flat metric.

Following the construction described in section \ref{quotientofclosedg2structures} we choose the vector field $Y$ generating an $S^1$ action preserving $\vp_0$ to be $e_6$ so that the connection form $\xi_0=e^6.$ The solution to the induced flow on the quotient nilmanifold $P^6$ is then given by
\begin{gather*}
H_t=f^{1/2},\\
\om_t=\om_0=-e^{17}+e^{24}-e^{35},\\
\Om_t^+= f^{\frac{9}{4}}e^{123} + f^{-\frac{3}{4}}(e^{145}-e^{257}-e^{347}),\\
\Om_t^-= -f^{-\frac{9}{4}}e^{457} - f^{\frac{3}{4}}(e^{237}+e^{125}+e^{134}),\\
g_{\om_t}=f^{\frac{3}{2}} (({e^1})^2+({e^2})^2+({e^3})^2)+f^{-\frac{3}{2}}(({e^4})^2+({e^5})^2+({e^7})^2),\\
\gamma^1_6=\frac{1}{2}f^{-5/2}e^5,\ \ d\xi=\frac{1}{2}(e^{12}-f^{-3}e^{47}) +\frac{1}{2}(e^{12}+f^{-3}e^{47}) \in \Lm^2_6\oplus\Lm^2_8
\end{gather*}
We see that the symplectic form, and hence the volume, stays constant while the metric (equivalently the complex structure) degenerates as $t\to \infty$. As in \cite{Fernandez2016LF} one can verify that the curvature decays to zero as $t \to \infty.$ 
Note that neither $\tau_6$ nor $\tau_8$ is zero in this case, so this example can be viewed as a generic case with regards to the type of the $SU(3)$-structure. 

\subsection{New examples from the Apostolov-Salamon Ansatz}\ 

As a flow on $SU(3)$-structures one can ask if the flow preserves any interesting geometric quantity. For instance we already saw that since $\vp$ stays closed under the flow $(P^6,\om)$ stays symplectic. A natural question to ask is: if the almost complex structure $J$ is initially integrable, does this persists under the flow? 

If $\vp$ is torsion free and $(P^6,\om,\Om^\pm)$ is K\"ahler then Apostolov and Salamon proved that $P^6$ is in fact a $\C^\times$ bundle over a $4$-manifold $M^4$, which in special cases turns out to be hyperK\"ahler cf. \cite[Theorem 1]{Apostolov2003}. This motivates us to search for solutions to the flow preserving the K\"ahler condition on these spaces. 

Consider the manifold $L^7=N^6\times \R_u$ where $N^6$ is a compact nilmanifold associated to the Lie algebra $$(0,0,0,0,13-24,14+23).$$ The $G_2$-structure determined by
\begin{equation}\vp = -f^2h (\om_1 \w du) + g^2 h(e^{56} \w du) - g f^2(\om_3 \w e^5 - \om_2 \w e^6),\label{varphiAS}\end{equation}
defines a $G_2$ coframing on $L^7$ given by $E^1=f e^3$, $E^2=f e^2$, $E^3=g e^5$, $E^4=-g e^6$, $E^5=-f e^1$, $E^6=-f e^4$ and $E^7= hdu$, where $f,g,h$ are (nowhere vanishing) functions of $u$ only and $\om_i$ denote the standard self-dual $2$-forms in $\langle e^1,e^2,e^3,e^4\rangle$ i.e.
\[\om_1=e^{12}+e^{34},\ \  \om_2=e^{13}-e^{24} \text{\ \ \ and\ \ \ } \om_3=e^{14}+e^{23}. \]
\begin{Lemma}\label{ASclosedcoclosed} The intrinsic torsion of $G_2$-structure defined by (\ref{varphiAS}) is given by 
\begin{enumerate}
\item\label{closedh} $d \vp=0$\ if and only if\ $\frac{\partial}{\partial u} (g f^2)= g^2h.$
\item  $d *_\vp \vp =0$\ if and only if     \ $ \frac{\partial}{ \partial u}(f g)=0$ and $ \frac{\partial}{\partial u} (f )= \frac{gh}{f}.$
\end{enumerate}
\end{Lemma}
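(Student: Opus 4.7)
The plan is to verify both conditions by direct exterior differentiation, using the nilpotent structure equations $de^i = 0$ for $i\leq 4$, $de^5 = \om_2$, $de^6 = \om_3$, together with the quaternionic identities $\om_i\w\om_j = 2\delta_{ij}\, e^{1234}$ on $\langle e^1,\dots,e^4\rangle$. Since $f,g,h$ depend only on $u$, every differential $df$, $dg$, $dh$ is proportional to $du$.

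For part (1), I would differentiate the three summands of $\vp$ in (\ref{varphiAS}) separately. The term $-f^2h\,\om_1\w du$ contributes nothing: $d(f^2h)$ is proportional to $du$, producing a repeated factor, while $d\om_1 = 0$. The term $g^2h\, e^{56}\w du$ reduces to $-g^2h\,(\om_3\w e^5 - \om_2\w e^6)\w du$ via $d(e^{56}) = \om_2\w e^6 - \om_3\w e^5$. In the third summand $-gf^2(\om_3\w e^5 - \om_2\w e^6)$, the algebraic part of the derivative vanishes because $\om_2\w\om_3 = 0$, leaving only the coefficient contribution $(gf^2)'(\om_3\w e^5 - \om_2\w e^6)\w du$. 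Summing,
\[
d\vp = \bigl[(gf^2)' - g^2h\bigr]\,(\om_3\w e^5 - \om_2\w e^6)\w du,
\]
which gives the first equivalence.

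For part (2), I first compute $*_\vp\vp$ by rewriting the standard dual expression $*_0\vp_0 = dx_{4567} + dx_{2367} + dx_{2345} + dx_{1357} - dx_{1346} - dx_{1256} - dx_{1247}$ in the orthonormal $G_2$ coframe $\{E^i\}$ listed after (\ref{varphiAS}), and then translating back to $(e^j, du)$. After tracking the signs from $E^1 = fe^3$, $E^4 = -ge^6$, $E^5 = -fe^1$, etc., the outcome is
\[
*_\vp\vp = f^4\, e^{1234} - f^2g^2\,\om_1\w e^{56} - f^2gh\,(\om_2\w e^5 + \om_3\w e^6)\w du.
\]
Differentiating, the key simplifications are $d(\om_1\w e^{56}) = 0$ (from $\om_1\w\om_2 = \om_1\w\om_3 = 0$), together with $d(\om_2\w e^5) = \om_2^2 = 2e^{1234}$ and $d(\om_3\w e^6) = \om_3^2 = 2e^{1234}$. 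Collecting terms yields
\[
d*_\vp\vp = 4f^2(ff' - gh)\,e^{1234}\w du - (f^2g^2)'\,\om_1\w e^{56}\w du,
\]
so $d*_\vp\vp = 0$ iff $ff' = gh$ and $(f^2g^2)' = 0$, which (using that $f,g$ are nowhere vanishing) are equivalent to $f' = gh/f$ and $(fg)' = 0$, as claimed.

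The main obstacle is bookkeeping the formula for $*_\vp\vp$: translating the standard dual expression through the substitution $E^i \mapsto (e^j, du)$ requires careful sign tracking across the seven $E^{ijk\ell}$ summands. Once that formula is in hand, the remaining exterior derivatives reduce immediately via the nilpotent structure equations and the quaternionic algebra of the $\om_i$.
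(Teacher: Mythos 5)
Your proposal is correct and follows the same route as the paper, which simply records the results of the direct calculation: your formulas for $d\vp$ and $d*_\vp\vp$ agree exactly with equations (4.2) and (\ref{AStorsion}) (note $(\om_3\w e^5-\om_2\w e^6)\w du = -(\om_2\w e^6-\om_3\w e^5)\w du$, so the signs match), and your expression for $*_\vp\vp$ is consistent with the paper's general formula $*_\vp\vp=\tfrac12 H^2\om^2-\xi\w H^{1/2}\Om^-$ applied to the listed quotient data. The only difference is that you spell out the computation of $*_\vp\vp$ via the standard dual in the orthonormal coframe, which the paper leaves implicit.
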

\begin{proof}
	A direct calculation shows
	\begin{equation}
	d\vp=(g^2h-\ddu(gf^2))(\om_2 \w e^6-\om_3\w e^5)\w du
	\end{equation}
	and 
	\begin{equation}
		d(*_\vp\vp)=-\ddu(f^2g^2)(e^{56}\w \om_1)\w du+4f^2(f\ddu(f)-gh)e^{1234}\w du.\label{AStorsion}
	\end{equation}
\end{proof}
The explicit torsion free $G_2$-structure given by setting $f=(3u)^{1/3}$, $g=(3u)^{-1/3}$ and $h=1$ corresponds to Example 1 in \cite{Apostolov2003}.

Let us now impose that $\vp$ is closed, so that $h$ is determined by condition (\ref{closedh}) of Lemma \ref{ASclosedcoclosed}, and consider the $S^1$ action generated by the vector field $Y=e_6$. Then applying the construction of section \ref{quotientofclosedg2structures} one can compute that
\begin{gather*}
\om=(g^2 h) du \w e^5 + (g f^2) \om_2,\\
\Om^+= - (f^2 h g^{3/2}) \om_1 \w du - (g^{5/2}f^2) \om_3 \w e^5, \\
\Om^-= - (f^2 g^{5/2}) \om_1 \w e^5 + (g^{3/2}h f^2) \om_3 \w du \\
g_{\om}=g f^2 ((e^1)^2 + (e^2)^2+(e^3)^2+(e^4)^2 )+ g^3 (e^5)^2 + h^2 g (du)^2,\\
\xi=e^6,\ \ \  \gamma_6^1=-h f^{-2} du, \ \ d\xi = \om_3 \in \Lm^2_6,\\
H=g^{-1}, \ \ \ \pi_1= \frac{\partial}{\partial u} (\log (g^{5/2}f^2))du, \ \ \  \tau_8=0.
\end{gather*}
Since $\tau_8=(d\xi)^2_8=0$, from (\ref{omplustorsion}) and (\ref{omminustorsion}) we see that the only non-zero component of the $SU(3)$ torsion is $\pi_1$ and hence
it follows that these closed $G_2$-structures all admit K\"ahler reductions (see Definition \ref{su3strucdefinitions}).
\begin{Lemma}
The torsion form of closed $\vp$ is given by
\[ \tau= \ddu(f^2 g^2) \frac{1}{hg^2}\om_1 + 4(\frac{g^3}{f^2}-\frac{g^2}{fh}\ddu (f) )e^{56}.\]
\iffalse
\begin{align*}  
d\tau =\  &\ddu ( \ddu (f^2 g^2) \cdot \frac{1}{hg^2}) \om_1 \w du + \ddu(4(\frac{g^3}{f^2} -\frac{g^2}{fh} \ddu(f)))(du \w e^{56})\\ &+4(\frac{g^3}{f^2}- \frac{g^2}{fh}\ddu(f) )(\om_2 \w e^6 - \om_3 \w e^5) .
\end{align*} \fi
\end{Lemma}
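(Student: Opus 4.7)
The plan is to solve the equation $d*_\vp\vp = \tau\w\vp$ directly, using the expression for $d*_\vp\vp$ in the closed case already computed as (\ref{AStorsion}), together with the closed condition $\ddu(gf^2)=g^2h$ from Lemma \ref{ASclosedcoclosed}\,(\ref{closedh}).

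Since the right-hand side of (\ref{AStorsion}) only involves the two $5$-forms $e^{56}\w\om_1\w du$ and $e^{1234}\w du$, and the wedge map $\al\mapsto\al\w\vp$ sends the two natural $SU(2)$-invariant $2$-forms $\om_1$ and $e^{56}$ precisely into the span of these $5$-forms, I would posit the ansatz $\tau = A(u)\,\om_1 + B(u)\,e^{56}$. Using (\ref{varphiAS}) and the elementary identities $\om_i\w\om_j = 2\delta_{ij}\,e^{1234}$, $\om_1\w(\om_3\w e^5 - \om_2\w e^6)=0$, and $(e^{56})^2=0$, a short calculation gives
\[
\tau\w\vp = -2Af^2h\,e^{1234}\w du + (Ag^2h - Bf^2h)\,e^{56}\w\om_1\w du.
\]
Matching coefficients against (\ref{AStorsion}) yields the linear system
\[
-2Af^2h = 4f^2(f\ddu f - gh),\qquad Ag^2h - Bf^2h = -\ddu(f^2g^2),
\]
solved by $A = 2(gh - f\ddu f)/h$ and $B = (Ag^2h + \ddu(f^2g^2))/(f^2h)$.

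Next I would invoke the closed condition by differentiating the identity $f^2g^2 = g\cdot(gf^2)$ and using $\ddu(gf^2)=g^2h$ to obtain $\ddu(f^2g^2) = 2g^2(gh - f\ddu f)$. Substituting back, $A$ becomes $\ddu(f^2g^2)/(g^2h)$ while $B$ becomes $4(g^3/f^2 - g^2\ddu f/(fh))$, matching the claimed formula exactly.

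Finally, to confirm that $\tau$ is in fact the intrinsic torsion, i.e.\ that $\tau\in\Lm^2_{14}$, I would either appeal to uniqueness, noting that the map $\w\vp\colon\Lm^2\to\Lm^5$ is an isomorphism between $21$-dimensional spaces since its restrictions to $\Lm^2_7$ and $\Lm^2_{14}$ coincide with $2*_\vp$ and $-*_\vp$ respectively and are therefore injective, or check it directly via (\ref{g2description2214}) by computing $\om_1\w *_\vp\vp$ and $e^{56}\w *_\vp\vp$ (each a scalar multiple of the top form $e^{123456}$ on the base) and observing that the combination $\tau\w *_\vp\vp$ vanishes precisely because $Bf^2 = 2Ag^2$, an identity immediate from the closed condition. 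The main obstacle will be careful sign-tracking in the wedge computation of $\tau\w\vp$ and in the rearrangement of $\ddu(f^2g^2)$; once these are handled, the lemma follows by straightforward coefficient matching.
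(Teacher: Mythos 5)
Your proposal is correct and follows essentially the same route as the paper, which simply solves $\tau\w\vp=d*_\vp\vp$ using the already-computed expression for $d*_\vp\vp$ and the closed condition $\ddu(gf^2)=g^2h$; your coefficient matching, the identity $\ddu(f^2g^2)=2g^2(gh-f\ddu f)$, and the verification that $\tau\in\Lm^2_{14}$ (via $Bf^2=2Ag^2$ or injectivity of $\w\vp$ on $\Lm^2$) all check out.
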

\begin{proof}
This follows directly from (\ref{AStorsion}) and using the closed condition i.e. (\ref{closedh}) of Lemma \ref{ASclosedcoclosed}.
\end{proof}
We now search for solutions to the Laplacian flow (\ref{LF}) of the form (\ref{varphiAS}), where only the functions $f,g,h$ depend on $t$. Computing the Laplacian flow gives the system
\begin{gather}  
	\ddt(f^2h) (\om_1 \w du)  = -\ddu ( \ddu (f^2 g^2) \cdot \frac{1}{hg^2}) (\om_1 \w du),\label{LF1'}\\ 
	 \ddt( g f^2)( \om_2 \w e^6-\om_3 \w e^5) = 4(\frac{g^3}{f^2}- \frac{g^2}{fh}\ddu(f) )(\om_2 \w e^6 - \om_3 \w e^5),\label{LF2'}\\ 
	 \ddt( g^2 h)(e^{56} \w du) =\ddu(4(\frac{g^3}{f^2} -\frac{g^2}{fh} \ddu(f)))( e^{56}\w du).\label{LF3'}
\end{gather} 
Equation (\ref{LF3'}) is a consequence of (\ref{LF2'}) by differentiating with respect to $u$ and using the closed condition from Lemma \ref{ASclosedcoclosed}. Thus, the Laplacian flow is reduced to the pair
\begin{gather}
\ddt (f^2 h)= -\ddu (\frac{1}{h g^2} \ddu(f^2 g^2)),\label{LF1}\\
\ddt (g f^2)= 4 g^2 ( \frac{g}{f^2}-\frac{1}{h f} \ddu f). \label{LF2}
\end{gather}
Since we are on a non-compact manifold the existence and uniqueness of a solution, given initial data, to (\ref{LF1}) and (\ref{LF2}) is not always guaranteed. We do however know that there exists at least one solution namely the (incomplete) torsion free one of Apostolov-Salamon \cite{Apostolov2003}.
Rather than addressing the general existence problem, we shall instead find another explicit solution as follows.\\
\noindent\textbf{A shrinking gradient soliton.}\\
\noindent With $f(u)=2^{-1/4}{\rm e}^{u/2},$ $g(u)=2^{1/2}{\rm e}^u$ and $h(u)=1$ we have
\[\vp_0 = -2^{-1/2}{\rm e}^u (\om_1 \w du) + 2 {\rm e}^{2u}(e^{56} \w du) - {\rm e}^{2u}(\om_3 \w e^5 - \om_2 \w e^6).\]
Taking $\lambda=-18$ and $V=15\cdot \partial_u$, one verifies directly that the soliton equation (\ref{solitonequationoriginal}) is satisfied. Thus, it defines a gradient shrinking soliton with the induced metric 
\[g_{\vp_0}=2^{-1/2}{\rm e}^{u}((e^1)^2+(e^2)^2+(e^3)^2+(e^4)^2)+2{\rm e}^{2u}((e^5)^2+(e^6)^2)+du^2\]
which is clearly complete. To the best of our knowledge this is the first example of an inhomogeneous shrinker.\\ 

To derive the general soliton equation we first observe that an invariant vector field $V$ is of the form $V=a\cdot \partial_u +b\cdot e_5+c\cdot e_6$, for functions $a(u)$, $b(u)$ and $c(u)$. Comparing with the expressions for $\tau$ and $\varphi$ it is easy to see that we get a consistent system only if $b=c=0$. By reparametrising the $u$-coordinate we can set $h=1$, and defining $F=f^2g$ and $G=g^2$ the closed condition becomes equivalent to $G=F'$.
We compute the soliton equation for the unknowns $(F(u),a(u))$ as
\begin{gather}
(\log(F^2 F'))'=\frac{\lambda}{(\log(F))'}+a,\\
\bigg(\frac{(F(F')^{1/2})'}{F'}\bigg)'=-\lambda F (F')^{-1/2}-(aF(F')^{-1/2})'.
\end{gather}
With the ansatz $F={\rm e}^{ku}$, we find the solution $\lambda=-\frac{9}{2}k^2$ and $a=\frac{15}{2}k>0$. The scalar curvature is $$Scal(g_{\vp_0})=-\frac{1}{2}\|\tau\|^2_{\vp_0}=-\frac{27}{4}k^2.$$
Observe that this construction applies to any hyperK\"ahler $4$-manifold $M^4$ such that $[\om_2], [\om_3] \in H^2(M^4,2\pi\mathbb{Z})$. In this case $P^6$ can be taken to be the total space of the $\mathbb{T}^2$ bundle determined by these integral cohomology classes and $e^5, e^6$ are the connection $1$-forms with curvature $\om_2, \om_3$ respectively \cite{Apostolov2003}. 

\begin{Rem}
We should also point out that recently Ball found the first examples of inhomogeneous \textit{steady} solitons on the same spaces \cite[Example 2]{GavinBall2020}, also arising from what we referred to as the Apostolov-Salamon Ansatz. \end{Rem}

\bibliography{referencing}
\bibliographystyle{plain}

\end{document}